\documentclass{article}
\usepackage{graphicx,geometry,amsmath,amsthm,amssymb,mathdots,authblk,hyperref} 
\usepackage{array,float}

\newtheorem{theorem}{Theorem}[section]
\newtheorem{lemma}[theorem]{Lemma}
\newtheorem{corollary}[theorem]{Corollary}
\newtheorem{remark}[theorem]{Remark}

\numberwithin{equation}{section}

\title{Energy relations for the generalised Vladimirov derivative via Bruhat-Tits tree extensions}
\author{An Huang, Yaojia Sun}
\date{}

\begin{document}

\maketitle
\begin{abstract}
    We formulate a proof on non-Archimedean analogue of the Caffarelli–Silvestre extension for fractional Laplacian on the Bruhat–Tits tree $T_p$ in terms of energy relations after identifying $\partial T_p$ with $\mathbb{P}^1(\mathbb{Q}_p)$, which reproduces the generalised Vladimirov derivative of the rescaled boundary function.
\end{abstract}

\section{Introduction}
In 2007, Caffarelli and Silvestre investigated the extension problem for the fractional Laplacian \cite{MR2354493}. They began by considering the harmonic extension of $f(x)$ to the upper half-space $\mathbb{R}^n\times[0,+\infty)$
\begin{align*}
\begin{cases}
    \Delta u(x,r)=0,&(x,r)\in\mathbb{R}^n\times[0,+\infty),\\
    u(x,0)=f(x),&x\in\mathbb{R}^n,
\end{cases}
\end{align*}
and proved that
\begin{align*}
    (-\Delta)^{\frac{1}{2}}f(x)=-u_y(x,0).
\end{align*}
This means that applying the square root of the Laplacian to a function on the boundary is equivalent to taking the normal derivative of its harmonic extension at the boundary.

For the general case, they first considered the Laplacian on a function defined on $(x,y)\in\mathbb{R}^n\times\mathbb{R}^{1+a}$ with rotationally invariant $y$, i.e., $u(x,y)=u(x,|y|)$. Denoting $r=|y|$ and $X=(x,r)$, then $\Delta=\Delta_x+\partial_r^2+\frac{a}{r}\partial_r$. So they considered
\begin{equation}
\label{extensionfunction_a}
\begin{aligned}
\begin{cases}
    \Delta_x u(x,r)+u_{rr}(x,r)+\frac{a}{r}u_r(x,r)=0,&(x,r)\in\mathbb{R}^n\times[0,+\infty),\\
    u(x,0)=f(x),&x\in\mathbb{R}^n.
\end{cases}
\end{aligned}  
\end{equation}
Recognizing that the dimensional parameter $a$ in \eqref{extensionfunction_a} can be analytically continued beyond integer values, they proved that
\begin{align*}
    C(-\Delta)^{\frac{1-a}{2}}f(x)=-\lim_{r\to0}r^au_r(x,r)=(a-1)\lim_{r\to0}\frac{u(x,r)-u(x,0)}{r^{1-a}}.
\end{align*}
Letting $a=1-s$, we get
\begin{align}
\label{extensionrelation_s}
    C(-\Delta)^{\frac{s}{2}}f(x)=-\lim_{r\to0}r^{1-s}u_r(x,r)=-s\lim_{r\to0}\frac{u(x,r)-u(x,0)}{r^{s}}.
\end{align}
Thus, this means that up to a constant, applying the fractional Laplacian to a function on the boundary is equivalent to taking the deformed derivative of its extension with respect to \eqref{extensionfunction_a} at the boundary.

There are two ways to prove \eqref{extensionrelation_s}. First, we observe that
\eqref{extensionfunction_a} is equivalent to
\begin{align*}
\begin{cases}
    \text{div}(r^{1-s}\nabla u(x,r))=0,&(x,r)\in\mathbb{R}^n\times[0,+\infty),\\
    u(x,0)=f(x),&x\in\mathbb{R}^n,
\end{cases}
\end{align*}
which is the minimizer for the functional
\begin{align}
\label{functional_Archimedean}
    J(u)=\int_{\mathbb{R}^n\times[0,+\infty)}|\nabla u|^2r^{1-s}dX.
\end{align}
Since fractional Laplacian is defined by the Fourier transform, a natural idea is to prove the corresponding energy functionals coincide by taking the Fourier transform of \eqref{extensionfunction_a}, then we can get
\begin{align*}
    \int_{\mathbb{R}^n\times[0,+\infty)}|\nabla u|^2r^{1-s}dX=\int_{\mathbb{R}^n}|\xi|^s|\hat{f}(\xi)|^2d\xi.
\end{align*}
Another way to prove \eqref{extensionrelation_s} is to find the Poisson kernel for \eqref{extensionfunction_a}, and relate $u$ with $f$ by the Poisson integral formula.

After their pioneering work, Stinga and Torrea generalized the extension method from the fractional Laplacian to a wide class of second-order differential operators and established a crucial Harnack inequality, greatly enhancing the universality of their method \cite{MR2754080}. Subsequently, Galé, Miana and Stinga generalized this framework to the generators of $C_0$-semigroups in Banach spaces. By synthesizing methodologies from semigroup theory and wave equations, they established a profound connection between the continuation problem and the core analytical structures of functional analysis \cite{MR3056307}. Subsequently, Chamorro and Jarrín promoted the results of Stinga and Torrea \cite{MR3348985}.

Frank-González-Monticelli-Tan introduced the extension framework into the geometric setting for the first time, and proved the continuation problem for the CR fractional Laplacian on the Heisenberg group and for sub-Laplacians on nilpotent Lie groups \cite{MR3286532}. Subsequently, Papageorgiou analyzed the large-time asymptotic behavior of the Caffarelli–Silvestre extension solution on Riemannian manifolds with nonnegative Ricci curvature \cite{MR4779594}.

Chen-Lei-Wei investigated the extension problems related to the higher order fractional Laplacian, and they gave a new proof to the dissipative a priori estimate of quasi-geostrophic equations by using this technique \cite{MR3773814}. Later, Cora and Musina revealed the equivalence between the higher-order fractional Laplacian and the s-polyharmonic extension operator, providing a novel perspective for the study of higher-order nonlocal problems \cite{MR4429579}. Besides, De Luca, Felli and Siclari rigorously proved for the first time the strong unique continuation property for the spectral fractional Laplacian, and classified the boundary singularities by using the extension method and the Almgren-type monotonicity formula \cite{MR4608212}. In addition, Biswas and Stinga promoted the extension method to arbitrary higher-order fractional powers of operators with $s>0$ in Banach spaces, resolved the issue of initial conditions required for establishing well-posedness, and provided a new class of subordination formulas \cite{MR4742773}.

However, there are no such results in the non-Archimedean case, so our goal is to find the $p$-adic analogue of this problem. Specifically, we focus on the extension problems on Bruhat-Tits tree
\begin{align*}
    T_p=PGL(2,\mathbb{Q}_p)/PGL(2,\mathbb{Z}_p),
\end{align*}
which is a $p+1$-regular tree and can be viewed as the analogue of a hyperbolic upper half-plane in Archimedean case, see in Figure \ref{fig:bruhat-tits_tree}.
\begin{figure}[H]
        \centering
        \includegraphics[width=0.6\textwidth]{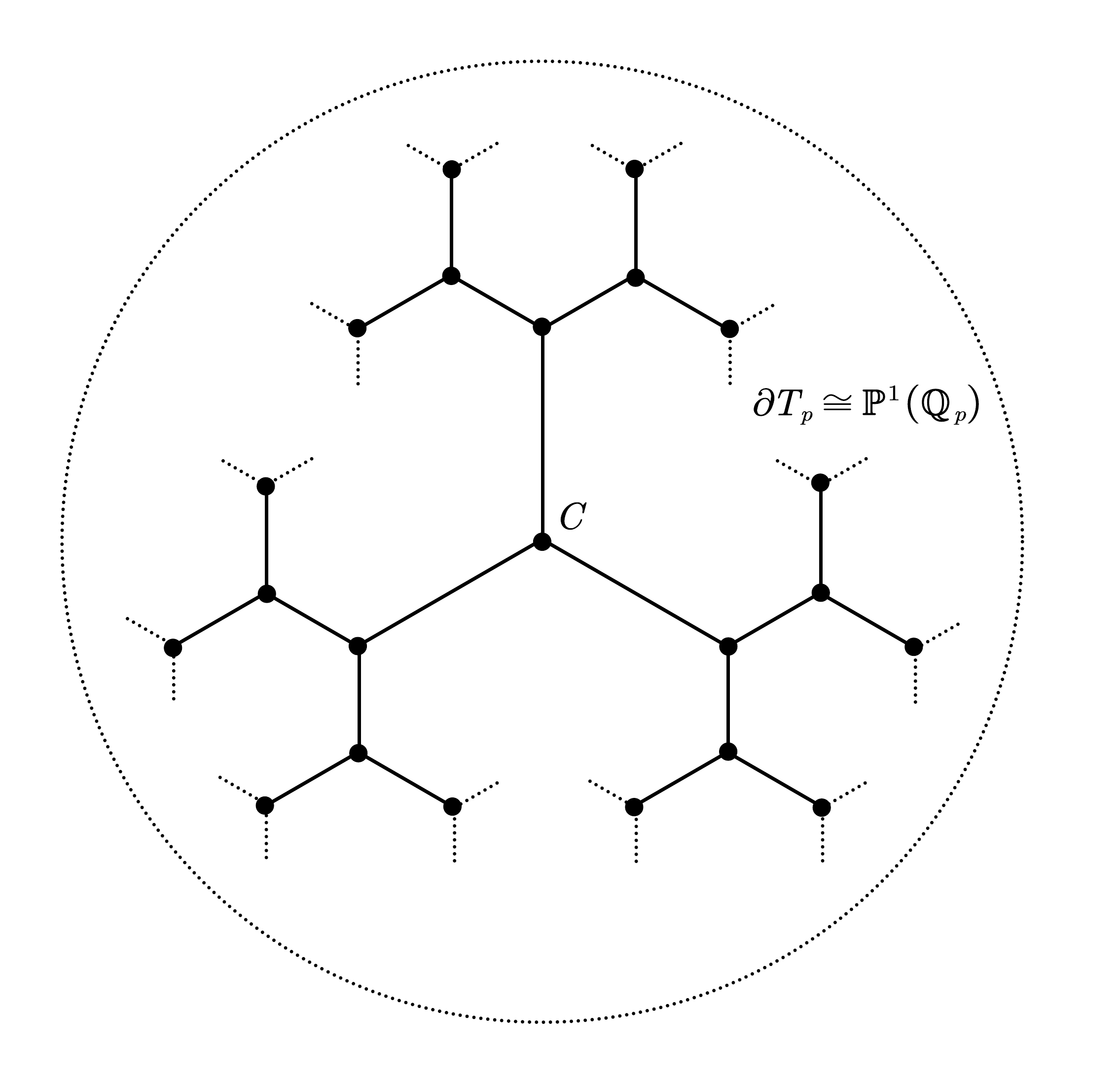}
        \caption{Bruhat-Tits tree $T_p$ for $p=2$}
        \label{fig:bruhat-tits_tree}
    \end{figure}

First, we fix some notations. Let $C$ denote a fixed reference vertex, serving as the base point within the tree $T_p$. Consider interior vertices $i,j\in T_p$ and boundary points $x,y\in\partial T_p$. The canonical graph distance $d(i,j)$ is defined as the number of edges comprising the unique simple path connecting $i$ and $j$. And let
\begin{align*}
    \delta(i_1\to i_2,j_1\to j_2)
\end{align*}
denote the length of the common part of the two paths for $i_1,i_2,j_1,j_2\in\overline{T}_p$, taken with a positive sign if they are oriented in the same direction and with a negative sign if they are oriented in opposite directions. 

After defining the distance between two vertices and two paths, we define
\begin{align*}
    \langle i,x\rangle=\delta(C\to x,C\to i)+\delta(i\to x,C\to i)
\end{align*}
to measure the distance between the vertex on the tree and the point on the boundary.

For the distance between two points on the boundary, we define
\begin{align*}
    |x,y|_p=p^{-\delta(C\to x,C\to y)},
\end{align*}
one can check that the distance is $GL(2,\mathbb{Z}_p)$-invariant, which can induce a $GL(2,\mathbb{Z}_p)$-invariant measure $\mu_0$ on the boundary defined by
\begin{align*}
    \mu_0(\partial B_i)=p^{-d(C,i)},
\end{align*}
where $B_i=\{j\in T_p|\text{we have the path }C\to i\to j\}$.

Recognizing the homeomorphism $\partial T_p \cong \mathbb{P}^1(\mathbb{Q}_p)$, the boundary naturally acts as the one-point compactification of the $p$-adic field. Consequently, the invariant measure $\mu_0$ on the tree boundary relates to the standard Haar measure $dx$ on $\mathbb{Q}_p$ via a stereographic transformation. Since each point $x\in\partial T_p$ is equivalent to a $p$-adic number, it has a unique $p$-adic expansion, and it also corresponds to the point at infinity of a ray emanating from a point $C_{-(l+1)}$ in $T_p$. Thus the ray $C_{-(l+1)}\to x$ uniquely determines its $p$-adic expansion. Specifically, if $x=\sum_{n=-l}^{\infty}a_np^n$ for $a_{-l}\neq 0$, then it can be viewed as a ray starting at $C_{-(l+1)}$, and at the first branch choosing the number $a_{-l}\in\{1,\dots,p-1\}$, after that at each branch choosing the number $a_n\in\{0,1,\dots,p-1\}$. It's easy to check that
\begin{equation}
\label{distance_relation}
\begin{aligned}
    |x,y|_p=\begin{cases}
        |x-y|_p,&|x|_p\leq1,\ |y|_p\leq1;\\
        |x^{-1}-y^{-1}|_p,&|x|_p>1,\ |y|_p>1;\\
        1,&\text{otherwise},
    \end{cases}
\end{aligned}    
\end{equation}

and
\begin{equation}
\label{measure_relation}
\begin{aligned}
    d\mu_0(x)=\begin{cases}
        dx,&|x|_p\leq1;\\
        \frac{dx}{|x|_p^2},&|x|_p>1.
    \end{cases}
\end{aligned}    
\end{equation}

Let $\hat{\Delta}_p$ be the standard graph Laplacian on $T_p$ defined by
\begin{align*}
    \hat{\Delta}_p\varphi(i)=\sum_{j\sim i}(\varphi(j)-\varphi(i)),
\end{align*}
and define the normal derivative at $\partial T_p$ as
\begin{align}
\label{normal_derivative_tree}
    \partial_n^{(p)}\varphi(x)=\lim_{i\to x}(\varphi(x)-\varphi(i))p^{d(C,i)}.
\end{align}

Given the totally disconnected topological nature of the $p$-adic field, which precludes the existence of canonical local differential operators. In 1990, Vladimirov defined the Vladimirov derivative via Fourier transform as
\begin{align*}
    D^su(x)=-\frac{1}{\Gamma_p(-s)}\int_{\mathbb{Q}_p}\frac{u(x)-u(z)}{|x-z|_p^{1+s}}dz,
\end{align*}
where $\Gamma_p(\alpha)=\frac{1-p^{\alpha-1}}{1-p^{-\alpha}}$ is the $p$-adic Gamma function \cite{MR1092528}.
\begin{remark}
    The generalised Vladimirov derivative $D^s$ can be viewed as the analogue of the fractional Laplacian $(-\Delta)^{\frac{s}{2}}$ as the symbol of the former is $|\xi|_p^s$ and that of the latter is $|\xi|^s$.
\end{remark}

Considering
\begin{align}
\label{eq:laplace_equation}
    \begin{cases}
        \hat{\Delta}_p\varphi(i)=0,&i\in T_p,\\
        \varphi|_{\partial T_p}=f(x).
    \end{cases}
\end{align}
Within the framework of non-Archimedean string theory, Zabrodin (1989) explicitly derived the Poisson kernel associated with the standard graph Laplacian $\hat{\Delta}_p$, demonstrating it to be $P(i,x) = \frac{p}{p+1}p^{\langle i,x\rangle}$, and he showed that
\begin{align}
\label{relation_s_1}
    \partial_n^{(p)}f(x)=\frac{p}{p+1}\int_{\partial T_p}\frac{f(x)-f(y)}{|x,y|_p^2}d\mu_0(y)
\end{align}
by the Poisson integral formula \cite{MR1003429}. Two years later, Gille made rigorous Zabrodin’s argument on the Poisson integral formula \cite{MR1715932}.

If we rewrite \eqref{relation_s_1} by \eqref{distance_relation} and \eqref{measure_relation}, we obtain
\begin{align}
\label{eq:extension_s=1}
    \partial_n^{(p)}f(x)=-\frac{p}{p+1}\Gamma_p(-1)\max\{1,|x|_p^2\}Df(x)=p^{-1}\max\{1,|x|_p^2\}Df(x),
\end{align}
and by the Green's formula on $T_p$ \cite{MR1003429}, we have
\begin{align*}
    \frac{1}{2}\sum_{\substack{i,j\\i\sim j}}(g(i)-g(j))(h(i)-h(j))=-\sum_{i}g(i)\hat{\Delta}_ph(i)+(p-1)\int_{\partial T_p}g(x)\partial_{n}^{(p)}h(x)d\mu_0(x),
\end{align*}
so that if we take $g=h=\varphi$ which satisfies \eqref{eq:laplace_equation}, and by \eqref{eq:extension_s=1} we have
\begin{align*}
     \frac{1}{2}\sum_{\substack{i,j\\i\sim j}}(\varphi(i)-\varphi(j))^2=&(p-1)\int_{\partial T_p}f(x)\partial_{n}^{(p)}\varphi(x)d\mu_0(x)\\
     =&(1-p^{-1})\int_{\partial T_p}f(x)\max\{1,|x|_p^2\}Df(x)d\mu_0(x)\\
     =&(1-p^{-1})\int_{\mathbb{Q}_p}f(x)Df(x)dx,
\end{align*}
which gives the relationship between the internal energy and the boundary energy after identifying $\partial T_p$ with $\mathbb{Q}_p$, and it recovers the Vladimirov derivative, which is the extension problem related to the Vladimirov derivative. Motivated by this elegant correspondence at $s=1$, this study embarks on generalizing the energy equivalence framework to encompass arbitrary fractional powers $s>0$.

Several years earlier, Heydeman, Marcolli, Saberi and Stoica first thought of the potential possibilities of this issue \cite{MR3858021}. Recently, the “scale corrected” harmonic extension on the Tate curve $T_{p^d}^{(w)}:=T_{p^d}/\sim_w$ has been used to study the finite-temperature extension of the $p$-adic AdS/CFT \cite{MR4882500}, which can be adapted to our setting.

Analogous to the Archimedean case, we have two possible ways to show the extension relationship. As there is no local derivative on $p$-adic field, we can't analogize \eqref{extensionfunction_a} to find our extension operator, and it is somewhat difficult to make the Fourier transform on the tree compatible with that on the boundary, so we will find the Poisson kernel and use the Poisson integral formula to solve this problem. Similarly, we define the deformed normal derivative on the boundary as
\begin{align}
\label{deformed_derivative}
    \partial_{n,s}^{(p)}\varphi(x)=\lim_{i\to x}(\varphi(x)-\varphi(i))p^{d(C,i)s},
\end{align}

Our main results are as follows:
\begin{theorem}
Considering the extension problem:
\begin{align*}
    \begin{cases}
        \tilde{D}^s\varphi(i)=0,&i\in T_p,\\
        \varphi|_{\partial T_p}=f(x),
    \end{cases}
\end{align*}
where $\tilde{D}^s$ is a weighted graph Laplacian on $T_p$ with the weight $w_{ij}=p^{d(C,i)(s-1)}+p^{d(C,j)(s-1)}$ when $i\sim j$. Then the relationship between the internal energy and the boundary energy is
    \begin{align*}
        &\sum_{i}\sum_{j\sim i}(\varphi(j)-\varphi(i))^2p^{d(C,i)(s-1)}\\
        =&\frac{(1-p^{-s})(p^{s-1}-p^{1-s})}{(1+p^{-1})(1-p^{-1-s})}
    \left(\int_{\partial T_p}f(x)d\mu_0(x)\right)^2+(1-p^{-s})(1+p^{1-s})\int_{\mathbb{Q}_p}a(x)f(x)D^s\left(a(x)f(x)\right)dx,
    \end{align*}
where
\begin{align*}
        a(x)=\begin{cases}
            1,&x\in\mathbb{Z}_p;\\
            |x|_p^{s-1},&x\in\mathbb{Q}_p\backslash\mathbb{Z}_p.
        \end{cases}
    \end{align*}
\end{theorem}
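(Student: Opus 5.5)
The plan is to follow the route the paper sets up for $s=1$: a Green's formula, then a Poisson kernel, then rewriting the boundary term on $\mathbb{Q}_p$.

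\textbf{Green's formula.} First I rewrite the left-hand side as an edge sum. Each edge $\{i,j\}$ appears twice in $\sum_i\sum_{j\sim i}$, once with factor $p^{d(C,i)(s-1)}$ and once with $p^{d(C,j)(s-1)}$. So the left-hand side equals $\sum_{\{i,j\}} w_{ij}(\varphi(i)-\varphi(j))^2$, with $\tilde D^s\varphi(i)=\sum_{j\sim i}w_{ij}(\varphi(j)-\varphi(i))$.

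Next I prove a weighted Green's formula on the finite ball $B_N=\{d(C,i)\le N\}$ by summation by parts. The interior term is $-\sum_{i\in B_N}\varphi(i)\tilde D^s\varphi(i)$, which vanishes. The remaining boundary term is
\begin{align*}
\sum_{d(C,i)=N}\sum_{j\sim i,\ d(C,j)=N+1} w_{ij}\,\varphi(i)\bigl(\varphi(j)-\varphi(i)\bigr).
\end{align*}
There are $(p+1)p^{N-1}$ vertices at level $N$, each carrying $\mu_0$-mass $p^{-N}$. Also $w_{ij}\asymp(1+p^{s-1})p^{N(s-1)}$, and $\varphi(j)-\varphi(i)$ should behave like $p^{-Ns}$ times a difference of the deformed derivatives \eqref{deformed_derivative}. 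Hence as $N\to\infty$ this term should converge to a constant multiple of $\int_{\partial T_p} f\,\partial^{(p)}_{n,s}\varphi\,d\mu_0$, with the constant built from $(1+p^{1-s})$ and $(1-p^{-s})$. Making this limit rigorous needs control of $\varphi(j)-\varphi(i)$ at depth $N$, and I will get that from the explicit kernel below.

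\textbf{Poisson kernel.} Fix $x\in\partial T_p$. For a vertex $i$, let $n=d(C,i)$ and let $k=\delta(C\to x,C\to i)\le n$. By symmetry I expect $P(i,x)$ to depend only on $(n,k)$. A vertex at level $n\ge1$ has one parent with edge weight $p^{(n-1)(s-1)}+p^{n(s-1)}$ and $p$ children with edge weight $p^{n(s-1)}+p^{(n+1)(s-1)}$; the root has $p+1$ children. Off the ray $C\to x$, the condition $\tilde D^sP=0$ is a three-term recurrence in $n$. I will solve it with the ansatz $\lambda^n$, which yields two characteristic roots, and I expect one of them to be $p^{-s}$ up to normalisation. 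On the ray, the children split into the one continuing toward $x$ and the $p-1$ branching off. Matching the branches and imposing $\int_{\partial T_p}P(i,x)\,d\mu_0(x)=1$ (constants are harmonic) fixes all coefficients.

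I then differentiate the Poisson integral $\varphi(i)=\int P(i,y)f(y)\,d\mu_0(y)$ by \eqref{deformed_derivative}. The expected result is
\begin{align*}
\partial^{(p)}_{n,s}\varphi(x)=c_1\int_{\partial T_p}\frac{f(x)-f(y)}{|x,y|_p^{1+s}}\,d\mu_0(y)+c_2\int_{\partial T_p} f\,d\mu_0 .
\end{align*}
The second term appears because the kernel is not $PGL(2,\mathbb{Q}_p)$-covariant once $s\neq1$. The source is the second characteristic root, which fails to cancel at $C$.

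\textbf{Passing to $\mathbb{Q}_p$.} Finally I substitute \eqref{distance_relation} and \eqref{measure_relation}. For $|x|_p>1$ or $|y|_p>1$ one has $|x,y|_p=|x-y|_p/(\max\{1,|x|_p\}\max\{1,|y|_p\})$. So
\begin{align*}
|x,y|_p^{-1-s}\,d\mu_0(x)\,d\mu_0(y)=a(x)a(y)\,|x-y|_p^{-1-s}\,dx\,dy .
\end{align*}
Symmetrising the double integral then turns $\int f\,\partial_{n,s}\varphi\,d\mu_0$ into $\int af\,D^s(af)\,dx$, using the paper's definition of $D^s$ with the factor $\Gamma_p(-s)$. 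The correction comes from the diagonal terms $f(x)^2\bigl(a(x)^2 m(x)-a(x)D^s a(x)\bigr)$, where $m(x)=\int|x,y|_p^{-1-s}\,d\mu_0(y)$. I expect these to combine with the $c_2$-term into the stated multiple of $\bigl(\int f\,d\mu_0\bigr)^2$. Checking this requires computing $D^s a$ explicitly on the shells $|x|_p=p^m$.

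\textbf{Main obstacle.} I expect the hardest step to be the Poisson kernel computation together with tracking its constants. These are the recurrence near the root, the $(p-1)$-fold branching, and the normalisation. The constant $\frac{(1-p^{-s})(p^{s-1}-p^{1-s})}{(1+p^{-1})(1-p^{-1-s})}$ must emerge exactly, and it should vanish at $s=1$, recovering the case done earlier in the paper. I will use this as a consistency check throughout.
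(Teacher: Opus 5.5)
Your overall route is the paper's: a weighted Green identity on balls with boundary constant $(p-p^{1-s})(1+p^{s-1})$, a Poisson kernel from the off-ray recurrence (roots $1,p^{-s}$) and the on-ray recurrence (roots $p,p^{-s}$), and the conversion $|x,y|_p^{-1-s}d\mu_0(x)d\mu_0(y)=a(x)a(y)|x-y|_p^{-1-s}dx\,dy$. That conversion is correct and cleaner than the paper's case split. However, the last step, which is exactly where the $\bigl(\int f\,d\mu_0\bigr)^2$ term comes from, does not close as you have set it up.

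First, your predicted form of the deformed derivative is wrong. Differentiate $\varphi(i)=\int P_s(i,y)f(y)\,d\mu_0(y)$ with $P_s(i,y)=\bigl(C_1p^{d(C,i_0)(1+s)}+C_2\bigr)p^{-d(C,i)s}$, using $\int P_s(i,y)\,d\mu_0(y)=1$. This gives
\[
\partial_{n,s}^{(p)}\varphi(x)=\int_{\partial T_p}\Bigl(\frac{C_1}{|x,y|_p^{1+s}}+C_2\Bigr)\bigl(f(x)-f(y)\bigr)\,d\mu_0(y).
\]
So the shift is $C_2\bigl((1+p^{-1})f(x)-\int f\,d\mu_0\bigr)$, not $c_2\int f\,d\mu_0$. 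Your version would give $\partial_{n,s}^{(p)}1=c_2(1+p^{-1})\neq0$. The local piece $C_2\,\mu_0(\partial T_p)\,f(x)$ is indispensable.

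Second, the diagonal correction is a nonzero multiple of $\int f^2\,d\mu_0$ for $s\neq1$, not of $\bigl(\int f\,d\mu_0\bigr)^2$. Your $m(x)=\int|x,y|_p^{-1-s}d\mu_0(y)$ is infinite, so that splitting is not meaningful; only the combination is finite. Precisely,
\begin{align*}
&\iint\frac{(a(x)f(x)-a(y)f(y))^2}{|x-y|_p^{1+s}}\,dx\,dy-\iint\frac{(f(x)-f(y))^2}{|x,y|_p^{1+s}}\,d\mu_0(x)\,d\mu_0(y)\\
&\qquad=2\int a(x)f^2(x)\int\frac{a(x)-a(y)}{|x-y|_p^{1+s}}\,dy\,dx .
\end{align*}
The explicit shell computation gives $a(x)\int\frac{a(x)-a(y)}{|x-y|_p^{1+s}}\,dy=K\max\{1,|x|_p\}^{-2}$ with $K=\frac{1-p^{-1}}{1-p^{-s}}-1$. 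Hence the correction equals $2K\int f^2\,d\mu_0$.

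With your form of the shift, nothing can absorb this term, and you would be left with a spurious $\int f^2\,d\mu_0$. What makes the theorem work is the symmetrised $C_2$ part:
\[
C_2\iint(f(x)-f(y))^2\,d\mu_0\,d\mu_0=2C_2(1+p^{-1})\int f^2\,d\mu_0-2C_2\Bigl(\int f\,d\mu_0\Bigr)^2 .
\]
Since $C_2/C_1=\frac{p^{1-s}-1}{(p+1)(1-p^{-s})}$, one has $K=(1+p^{-1})C_2/C_1$, so the two $\int f^2\,d\mu_0$ terms cancel exactly. The leftover $-2C_2\bigl(\int f\,d\mu_0\bigr)^2$, together with $-C_1\Gamma_p(-s)=p^{-s}$, then yields both constants in the statement.
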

\begin{remark}
    If the mean value of $f(x)$ is zero, then we have
\begin{align*}
    \sum_{i}\sum_{j\sim i}(\varphi(j)-\varphi(i))^2p^{d(C,i)(s-1)}=(1-p^{-s})(1+p^{1-s})\int_{\mathbb{Q}_p}a(x)f(x)D^s\left(a(x)f(x)\right)dx,
\end{align*}
\end{remark}
\begin{remark}
    When $s=1$, the result coincides with Zabrodin's work \cite{MR1003429}. And we cannot connect generalised Vladimirov derivative with deformed normal derivative on the boundary for $s\neq1$, because the integral kernel has a constant shift and the power $1+s$ does not coincide with the power $2$ induced by the measure on tree. 
\end{remark}
\begin{remark}
    Our results can be generalised to finite extensions of $\mathbb{Q}_p$.
\end{remark}

\section{Extension problem for the generalised Vladimirov derivative}
\subsection{Define the energy functional}
We begin by constructing a discrete analogue of \eqref{functional_Archimedean} to derive the energy functional on $T_p$. 

First, the tree $T_p$ can be viewed as the discrete topological analogue of the upper half-space $\mathbb{R}^n\times[0,+\infty)$, and it is easy to analogize $|\nabla u|^2$ by $\displaystyle\sum_{j\sim i}(\varphi(j)-\varphi(i))^2$. Besides, we note that $r$ is the distance from $X=(x,r)$ to $\partial(\mathbb{R}^n\times[0,+\infty))=\mathbb{R}^n\times\{0\}$, so we need to find out the "distance" from the vertex on tree $T_p$ to its boundary. From \eqref{normal_derivative_tree}, we can see that when $i\to x$, $p^{-d(C,i)}=p^{-\langle i,x\rangle}$ can be viewed as the distance from $i$ to $\partial T_p$. So we treat $p^{-d(C,i)}$ as the analogue of $r$.

Therefore, we define the energy functional as
\begin{align}
\label{energy_functional_analogue}
    S[\varphi]=\frac{1}{2}\sum_{i}\sum_{j\sim i}(\varphi(j)-\varphi(i))^2p^{-d(C,i)(1-s)}.
\end{align}

\subsection{Find the Euler–Lagrange equation}
We rewrite the energy functional \eqref{energy_functional_analogue} as
\begin{align}
\label{energy_functional}
    S[\varphi]=\frac{1}{2}\sum_{i}\sum_{j\sim i}(\varphi(j)-\varphi(i))^2p^{d(C,i)(s-1)}.
\end{align}
Take any locally constant function $v$ such that $v|_{\partial T_p}=0$. Then the minimizer $\varphi$ of \eqref{energy_functional} satisfies
\begin{align*}
    \frac{dS[\varphi+\varepsilon v]}{d\varepsilon}\Big|_{\varepsilon=0}=&\sum_{i}\sum_{j\sim i}(\varphi(j)-\varphi(i))(v(j)-v(i))p^{d(C,i)(s-1)}\\
    =&\sum_{i}\sum_{j\sim i}(\varphi(j)v(j)-\varphi(i)v(i)-v(i)(\varphi(j)-\varphi(i))-\varphi(i)(v(j)-v(i)))p^{d(C,i)(s-1)}\\
    =&\sum_{i}(\hat{\Delta}_p(\varphi(i)v(i))-v(i)\hat{\Delta}_p\varphi(i)-\varphi(i)\hat{\Delta}_pv(i))p^{d(C,i)(s-1)}\\
    =&\sum_{i}(\varphi(i)v(i)\hat{\Delta}_pp^{d(C,i)(s-1)}-p^{d(C,i)(s-1)}v(i)\hat{\Delta}_p\varphi(i)-v(i)\hat{\Delta}_p(p^{d(C,i)(s-1)}\varphi(i)))\\
    =&\sum_{i}(\varphi(i)\hat{\Delta}_pp^{d(C,i)(s-1)}-p^{d(C,i)(s-1)}\hat{\Delta}_p\varphi(i)-\hat{\Delta}_p(p^{d(C,i)(s-1)}\varphi(i)))v(i)\\
    =&0,
\end{align*}
so we have the Euler–Lagrange equation
\begin{align*}
    &\tilde{D}^s\varphi(i)\\
    =&\varphi(i)\hat{\Delta}_pp^{d(C,i)(s-1)}-p^{d(C,i)(s-1)}\hat{\Delta}_p\varphi(i)-\hat{\Delta}_p(p^{d(C,i)(s-1)}\varphi(i))\\
    =&\sum_{j\sim i}\left(\varphi(i)(p^{d(C,j)(s-1)}-p^{d(C,i)(s-1)})-p^{d(C,i)(s-1)}(\varphi(j)-\varphi(i))-(p^{d(C,j)(s-1)}\varphi(j)-p^{d(C,i)(s-1)}\varphi(i))\right)\\
    =&\sum_{j\sim i}(p^{d(C,i)(s-1)}+p^{d(C,j)(s-1)})(\varphi(i)-\varphi(j))\\
    =&0,
\end{align*}
which is a weighted graph Laplacian on $T_p$, the weight $w_{ij}=p^{d(C,i)(s-1)}+p^{d(C,j)(s-1)}$ when $i\sim j$.

\subsection{Determine the Poisson kernel}
The Poisson kernel $P_s(i,x)$ with respect to $\tilde{D}^s$ satisfies
\begin{equation}
\label{Poisson_eq}
    \begin{aligned}
        \begin{cases}
            \tilde{D}^sP_s(i,x)=0,&i\in T_p,\\
            \displaystyle\lim_{i\to y}P_s(i,x)=\delta_x(y),&y\in\partial T_p.
        \end{cases}
    \end{aligned}
\end{equation}

Let $i_0$ be the branch point of the ray $C\to x$ and directed segment $C\to i$, and when $i\in C\to x$, $i_0=i$. So we have $\delta(C\to x,C\to i)=d(C,i_0)$ and $\delta(i\to x,C\to i)=-d(i_0,i)$. Furthermore, if we fix $x\in\partial T_p$, then $\forall i\in T_p$, $i$ can be uniquely determined by $d(C,i_0)$ and $d(i_0,i)$, so the solution depend only on
\begin{align*}
    \begin{cases}
        d(C,i_0):=m,\\
        d(i_0,i):=r.
    \end{cases}
\end{align*}
Let $P_s(i,x)=f(m,r)$. We now solve \eqref{Poisson_eq} by separation of variables. Assume $f(m,r)=A(m)B(r)$. Now we determine $A(m)$, $B(r)$ and the initial condition in the following three cases, respectively. For convenience, we denote $\alpha=s-1$.
\begin{enumerate}
    \item[]\textbf{Case 1.} $i\notin C\to x$ (Determine $B(r)$).
    Then from Figure \ref{case1}, the equation becomes
    \begin{align*}
    &(p^{\alpha(m+r)}+p^{\alpha(m+r-1)})(A(m)B(r)-A(m)B(r-1))\\
    &+p(p^{\alpha(m+r)}+p^{\alpha(m+r+1)})(A(m)B(r)-A(m)B(r+1))=0.
\end{align*}
Dividing both sides of the equation by $p^{\alpha(m+r)}A(m)$ and simplify, we get
\begin{align*}
    B(r+1)-(1+p^{-(\alpha+1)})B(r)+p^{-(\alpha+1)}B(r-1)=0,
\end{align*}
which is a second-order difference equation, whose characteristic equation is
\begin{align*}
    \lambda^2-(1+p^{-(\alpha+1)})\lambda+p^{-(\alpha+1)}=0,
\end{align*}
then
\begin{align*}
    (\lambda-1)(\lambda-p^{-(\alpha+1)})=0,
\end{align*}
so we have $\lambda=1$ or $\lambda=p^{-(\alpha+1)}$. Given the boundary condition $\displaystyle\lim_{i \to y} P_s(i,x) = 0$ for all $y \neq x$, the discrete maximum principle mandates that the solution must decay as the radial depth $r \to \infty$. Consequently, we rigorously discard the non-decaying root $\lambda = 1$, yielding the radial profile $B(r) = p^{-(\alpha+1)r}$.

    \begin{figure}[H]
        \centering
        \includegraphics[width=0.6\textwidth]{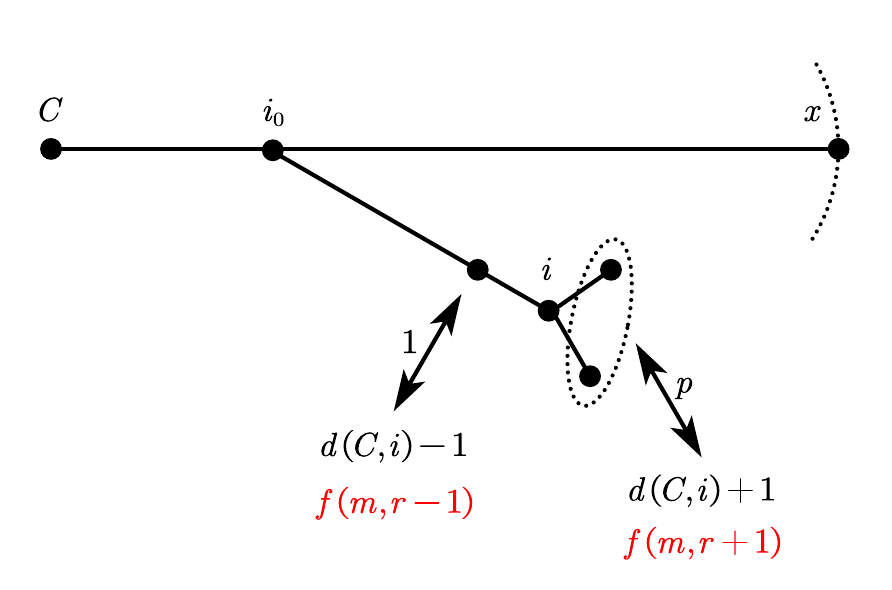}
        \caption{$i\notin C\to x$}
        \label{case1}
    \end{figure}
    \item[]\textbf{Case 2.} $i\in C\to x$ and $i\neq C$ (Determine $A(m)$). In this case, $r=0$. Then from Figure \ref{case2}, the equation becomes
    \begin{align*}
    &(p^{\alpha(m)}+p^{\alpha(m-1)})(A(m)-A(m-1))+(p^{\alpha(m)}+p^{\alpha(m+1)})(A(m)-A(m+1))\\
    &+(p-1)(p^{\alpha(m)}+p^{\alpha(m+1)})(A(m)-A(m)p^{-(\alpha+1)})=0,
\end{align*}
by simplifying, we obtain
\begin{align*}
    A(m+1)-(p^{-(\alpha+1)}+p)A(m)+p^{-\alpha}A(m-1)=0,
\end{align*}
which is also a second-order difference equation, whose characteristic equation is
\begin{align*}
    \mu^2-(p^{-(\alpha+1)}+p)\mu+p^{-\alpha}=0,
\end{align*}
then
\begin{align*}
    (\mu-p)(\mu-p^{-(\alpha+1)})=0,
\end{align*}
so we have $\mu=p$ or $\mu=p^{-(\alpha+1)}$. Then
\begin{align*}
    A(m)=C_1p^m+C_2p^{-(\alpha+1)m}.
\end{align*}
\begin{figure}[H]
        \centering
        \includegraphics[width=0.6\textwidth]{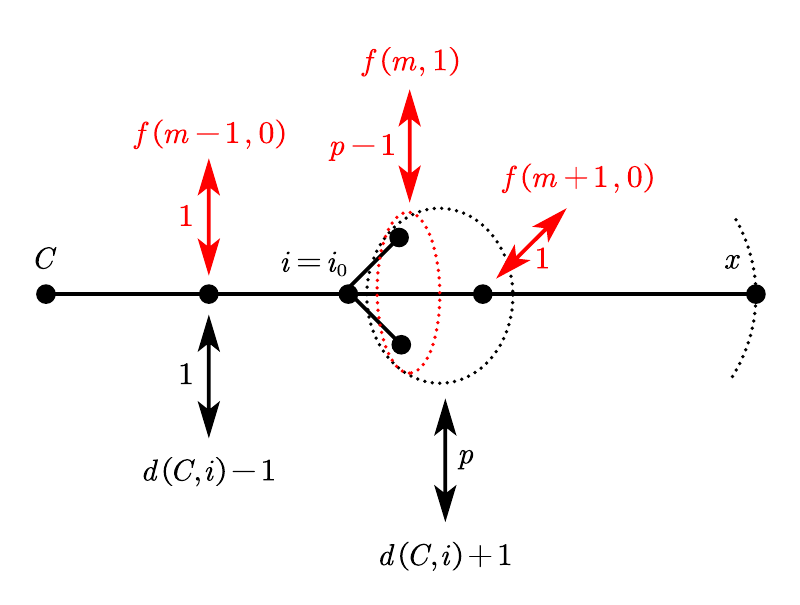}
        \caption{$i\in C\to x$ and $i\neq C$}
        \label{case2}
    \end{figure}
    \item[]\textbf{Case 3.} $i=C$ (Determine the initial value). In this case $m=r=0$. Then from Figure \ref{case3}, the equation becomes
    \begin{align*}
    (1+p^\alpha)(A(0)-A(1))+p(1+p^{\alpha})(A(0)-A(0)p^{-(\alpha+1)})=0,
\end{align*}
by simplification, we obtain
\begin{align*}
    A(1)=(1+p-p^{-\alpha})A(0),
\end{align*}
since $f(m.r)=A(m)B(r)=(C_1p^m+C_2p^{-(\alpha+1)m})p^{-(\alpha+1)r}$, substituting the initial condition, we obtain
\begin{align*}
    \begin{cases}
        C_1+C_2=A(0),\\
        C_1p+C_2p^{-(\alpha+1)}=(1+p-p^{-\alpha})A(0),
    \end{cases}
\end{align*}
solving this system yields
\begin{equation}
\label{C_1andC_2}
    \begin{aligned}
    \begin{cases}
        C_1=\frac{(p+1)(1-p^{-(\alpha+1)})}{p-p^{-(\alpha+1)}}A(0),\\
        C_2=\frac{p^{-\alpha}-1}{p-p^{-(\alpha+1)}}A(0).
    \end{cases}
\end{aligned}
\end{equation}

\begin{figure}[H]
        \centering
        \includegraphics[width=0.6\textwidth]{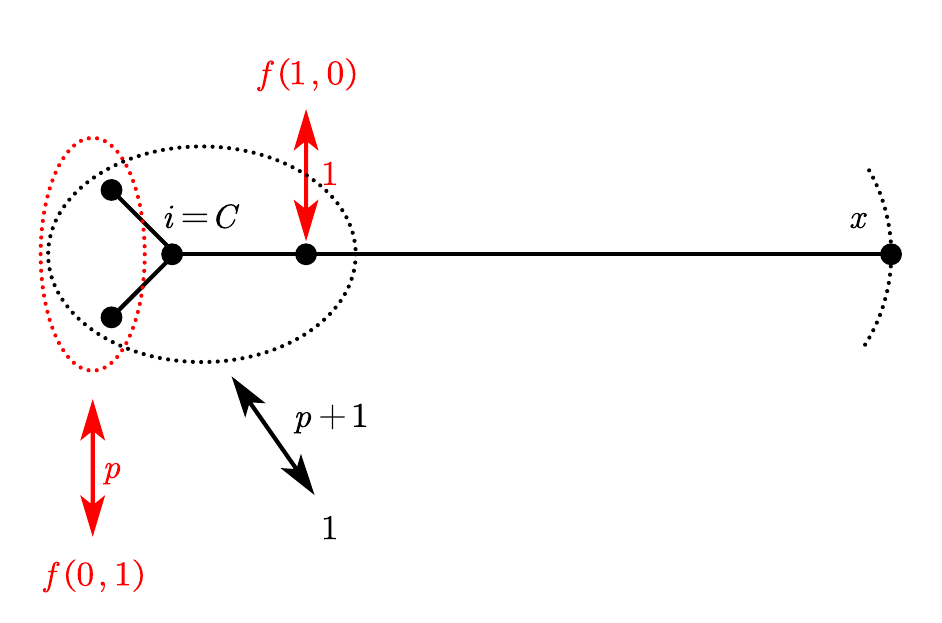}
        \caption{$i=C$}
        \label{case3}
    \end{figure}
\end{enumerate}

Substituting $\alpha=s-1$, we have
\begin{align*}
    P_s(i,x)=&A(0)\left(\frac{(p+1)(1-p^{-s})}{p-p^{-s}}p^{d(C,i_0)}+\frac{p^{1-s}-1}{p-p^{-s}}p^{-d(C,i_0)s}\right)p^{-d(i_0,i)s}\\
    =&A(0)\left(\frac{(p+1)(1-p^{-s})}{p-p^{-s}}p^{d(C,i_0)}+\frac{p^{1-s}-1}{p-p^{-s}}p^{-d(C,i_0)s}\right)p^{-(d(C,i)-d(C,i_0))s}\\
    =&A(0)\left(\frac{(p+1)(1-p^{-s})}{p-p^{-s}}p^{d(C,i_0)(1+s)}+\frac{p^{1-s}-1}{p-p^{-s}}\right)p^{-d(C,i)s}.
\end{align*}

Finally we determine $C_1$ and $C_2$. Since $P_s(i,x)$ is the Poisson kernel with respect to \eqref{Poisson_eq} and $\partial T_p$ is compact, for any locally constant function $f(x)$ on $\partial T_p$, we have
\begin{align*}
    \lim_{i\to x}\int_{\partial T_p}P_{s}(i,y)f(y)d\mu_0(y)=f(x).
\end{align*}
In particular, we take $f(x)=\chi_{\partial B_w}(x)$ for $w\in T_p$, it suffices to compute $F_w(i)=\int_{\partial B_w}P_{s}(i,y)d\mu_0(y)$. To evaluate the integral over the arbitrary boundary ball $\partial B_w$, it is imperative to stratify the computation based on the relative topological configuration of the evaluation vertex $i$ and the reference domain vertex $w$ with respect to the origin $C$. This geometric stratification guarantees that the piecewise constant nature of the Poisson kernel over distinct sub-branches is accurately integrated.
\begin{enumerate}
    \item[]\textbf{Case 1.} $i\in B_w$. In this case we have the path $C\to w\to i$. Let $i_k$ be the points on the path from $w$ to $i$, then $d(C,i_k)=k$, $k=d(C,w),d(C,w)+1,\dots,d(C,i)$. See Figure \ref{caseI}. Then $i_k$ has $p+1-2=p-1$ branch points not lying on path $C\to i$ for $k=d(C,w),d(C,w)+1,\dots,d(C,i)-1$, denoting each branch point by $i_{k,l}$, then $\mu_0(\partial B_{i_{k,l}})=p^{-{k+1}}$.

    When $x\in\partial B_{i_{k,l}}$, then $d(C,i_0)=d(C,i_k)=k$ and the Poisson kernel is constant on this branch. When $x\in\partial B_i$, the Poisson kernel is also constant on this branch. And we notice that $\mu_0(\partial B_{i})=p^{-d(C,i)}$, so we have
    
    \begin{align*}
    F_w(i)=&\sum_{k=d(C,w)}^{d(C,i)-1}(p-1)p^{-(k+1)}(C_1p^{k(1+s)}+C_2)p^{-d(C,i)s}+p^{-d(C,i)}(C_1p^{d(C,i)(1+s)}+C_2)p^{-d(C,i)s}\\
    =&(p-1)p^{-1-d(C,i)s}\sum_{k=d(C,w)}^{d(C,i)-1}(C_1p^{ks}+C_2p^{-k})+C_1+C_2p^{-d(C,i)(1+s)}\\
    =&(p-1)p^{-1-d(C,i)s}(C_1\frac{p^{d(C,i)s}-p^{d(C,w)s}}{p^s-1}+C_2\frac{p^{-d(C,w)}-p^{-d(C,i)}}{1-p^{-1}})+C_1+C_2p^{-d(C,i)(1+s)}\\
    =&(1-p^{-1})C_1\frac{1-p^{(d(C,w)-d(C,i))s}}{p^s-1}+C_2p^{-d(C,w)-d(C,i)s}+C_1.
\end{align*}
\begin{figure}[H]
        \centering
        \includegraphics[width=0.6\textwidth]{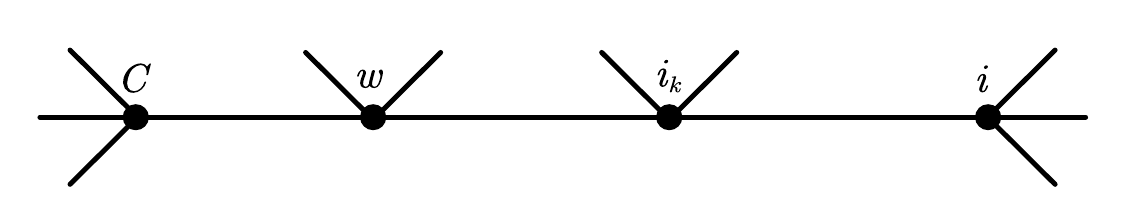}
        \caption{$i\in B_w$}
        \label{caseI}
    \end{figure}
    \item[]\textbf{Case 2.} $i\notin B_w$. We have three relative positions of $C,w,i$ in this case, see Figure \ref{caseII1}, Figure \ref{caseII2} and Figure \ref{caseII3}. But among all of these cases, the Poisson kernel is constant on $\partial B_w$ and $\mu_0(\partial B_w)=p^{-d(C,w)}$.
    
    So when $C,w,i$ are not on the same path,
    \begin{align*}
    F_w(i)=&p^{-d(C,w)}(C_1p^{d(C,i_0)(1+s)}+C_2)p^{-d(C,i)s}\\
    =&(C_1p^{d(C,i_0)(1+s)}+C_2)p^{-d(C,w)-d(C,i)s},
\end{align*}
    when we have the path $C\to i\to w$,
    \begin{align*}
    F_w(i)=&p^{-d(C,w)}(C_1p^{d(C,i_0)(1+s)}+C_2)p^{-d(C,i)s}\\
    =&(C_1p^{d(C,i)(1+s)}+C_2)p^{-d(C,w)-d(C,i)s},
\end{align*}
    when we have the path $i\to C\to w$,
    \begin{align*}
    F_w(i)=&p^{-d(C,w)}(C_1p^{d(C,i_0)(1+s)}+C_2)p^{-d(C,i)s}\\
    =&(C_1p^{-d(C,i)(1+s)}+C_2)p^{-d(C,w)-d(C,i)s},
\end{align*}
\begin{figure}[H]
        \centering
        \includegraphics[width=0.6\textwidth]{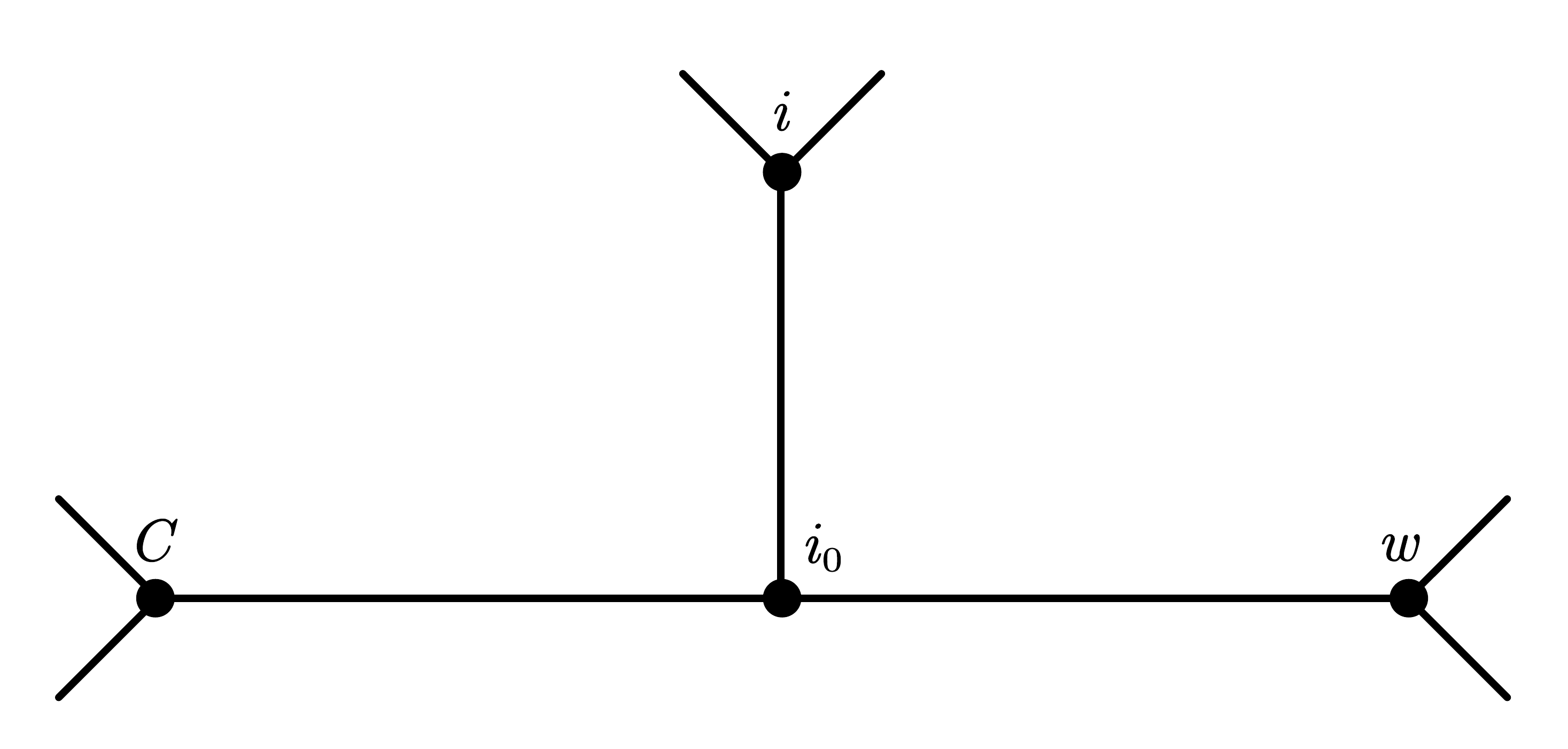}
        \caption{$i\notin B_w$ and $C,w,i$ are not on the same path}
        \label{caseII1}
    \end{figure}
\begin{figure}[H]
        \centering
        \includegraphics[width=0.6\textwidth]{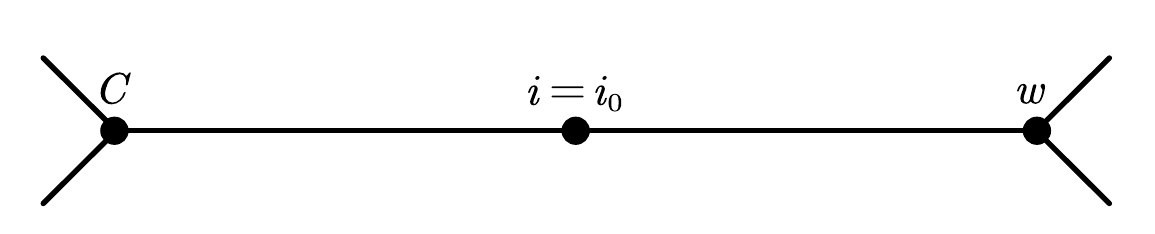}
        \caption{$i\notin B_w$ and we have the path $C\to i\to w$}
        \label{caseII2}
    \end{figure}
\begin{figure}[H]
        \centering
        \includegraphics[width=0.6\textwidth]{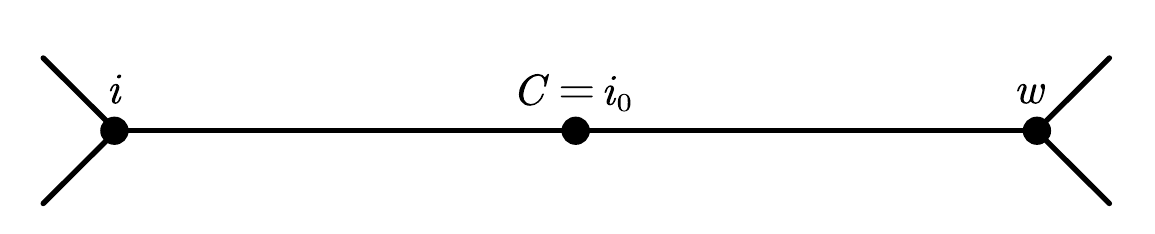}
        \caption{$i\notin B_w$ and we have the path $i\to C\to w$}
        \label{caseII3}
    \end{figure}
\end{enumerate}

In summary, we have    
\begin{align*}
    F_w(i)=\begin{cases}
        (1-p^{-1})C_1\frac{1-p^{(d(C,w)-d(C,i))s}}{p^s-1}+C_2p^{-d(C,w)-d(C,i)s}+C_1,&i\in B_w;\\
        (C_1p^{d(C,i_0)(1+s)}+C_2)p^{-d(C,w)-d(C,i)s},&i\notin B_w\ \text{and}\ i\notin C\to w;\\
        (C_1p^{d(C,i)(1+s)}+C_2)p^{-d(C,w)-d(C,i)s},&i\notin B_w\ \text{and}\ C\to i\to w;\\
        (C_1p^{-d(C,i)(1+s)}+C_2)p^{-d(C,w)-d(C,i)s},&i\notin B_w\ \text{and}\ i\to C\to w.
    \end{cases}
\end{align*}
Taking $i\to\partial T_p$, the case $i\notin B_w$ with the path $C\to i\to w$ will not appear and we have $d(C,i)\to+\infty$, so we have
\begin{align*}
    F_w(i)\to\begin{cases}
        \frac{(1-p^{-1})}{p^s-1}C_1+C_1=\frac{p^s-p^{-1}}{p^s-1}C_1,&i\to x;\\
        0,&i\to y\neq x,
    \end{cases}
\end{align*}
thus
\begin{align}
\label{C_1}
    \frac{p^s-p^{-1}}{p^s-1}C_1=1\Rightarrow C_1=\frac{p^s-1}{p^s-p^{-1}},
\end{align}
and from \eqref{C_1andC_2} we have
\begin{align}
\label{eq:C_2/C_1}
    \frac{C_2}{C_1}=\frac{p^{1-s}-1}{(p+1)(1-p^{-s})},
\end{align}
so
\begin{align}
\label{C_2}
    C_2=\frac{p^{1-s}-1}{(p+1)(1-p^{-s})}\frac{p^s-1}{p^s-p^{-1}}=\frac{p-p^s}{(p+1)(p^s-p^{-1})}.
\end{align}
Therefore, the Poisson kernel for the operator $\tilde{D}^s$ satisfying \eqref{Poisson_eq} is
\begin{align*}
    P_s(i,x)=\left(\frac{p^s-1}{p^s-p^{-1}}p^{d(C,i_0)(1+s)}+\frac{p-p^s}{(p+1)(p^s-p^{-1})}\right)p^{-d(C,i)s}.
\end{align*}

Lastly, since $\tilde{D}^s$ is a weighted graph Laplacian, the Poisson kernel we found is the unique solution of \eqref{Poisson_eq} by the maximum principle.
\begin{remark}
    When $s=1$, $\tilde{D}^1=-2\hat{\Delta}_p$, and $P_1(i,x)=\frac{p-1}{p-p^{-1}}p^{2d(C,i_0)-d(C,i)}=\frac{p}{p+1}p^{\langle i,x\rangle}$, which precisely coincides with the classical Poisson kernel for the unweighted graph Laplacian $\hat{\Delta}_p$ initially discovered by Zabrodin \cite{MR1003429}.
\end{remark}
\subsection{Relation with generalised Vladimirov derivative}
Considering the extension problem
\begin{equation}
\label{extension_problem}
    \begin{aligned}
        \begin{cases}
            \tilde{D}^s\varphi(i)=0,&i\in\ T_p,\\
            \varphi|_{\partial T_p}=f(x).
        \end{cases}
    \end{aligned}
\end{equation}
Then by the Poisson integral formula, we have
\begin{align*}
    \varphi(i)=\int_{\partial T_p}P_{s}(i,y)f(y)d\mu_0(y),
\end{align*}
and we notice that
\begin{align*}
    \varphi(x)=\int_{\partial T_p}P_{s}(i,y)f(x)d\mu_0(y)
\end{align*}
as $\int_{\partial T_p}P_{s}(i,y)d\mu_0(y)=1$, so
\begin{align*}
    \varphi(x)-\varphi(i)=\int_{\partial T_p}P_{s}(i,y)(f(x)-f(y))d\mu_0(y).
\end{align*}
Then the deformed normal derivative of $\varphi(i)$ at $\partial T_p$ is
\begin{equation}
\label{calculate_deformed_derivative}
\begin{aligned}
    \partial_{n,s}^{(p)}\varphi(x)=&\lim_{i\to x}(\varphi(x)-\varphi(i))p^{d(C,i)s}\\
    =&\lim_{i\to x}\int_{\partial T_p}\left(C_1p^{d(C,i_0)(1+s)}+C_2\right)p^{-d(C,i)s}(f(x)-f(y))d\mu_0(y)p^{d(C,i)s}\\
    =&\lim_{i\to x}\int_{\partial T_p}\left(C_1p^{d(C,i_0)(1+s)}+C_2\right)(f(x)-f(y))d\mu_0(y)\\
    =&\int_{\partial T_p}\left(\frac{C_1}{|x,y|_p^{1+s}}+C_2\right)(f(x)-f(y))d\mu_0(y),
\end{aligned}    
\end{equation}
where $C_1$ and $C_2$ can refer to \eqref{C_1} and \eqref{C_2}.

Since the power $1+s$ does not coincide with the power $2$ generated from the relationship between the measure on $\partial T_p$ and $\mathbb{Q}_p$, and the constant $C_2\neq0$ when $s\neq1$, so we cannot recover the generalised Vladimirov derivative $D^s$ through the deformed normal derivative on $\partial T_p$ unless $s=1$.

\section{Relationship between interior and boundary energies}
\begin{lemma}[Green’s first identity for weighted graphs]
\label{Green_identity_weighted_graph}
    Let $G=(V,E,w)$. $C\in V$ and denote $V_r=\{i\in V|d(C,i)\leq r\}$, $\partial V_r=\{i\in V|d(C,i)=r\}$, then for any $g,h:V\to\mathbb{R}$, we have
    \begin{align*}
        \frac{1}{2}\sum_{\substack{i,j\in V_r\\i\sim j}}(g(i)-g(j))(h(i)-h(j))w_{ij}=-\sum_{i\in V_r}g(i)\Delta_wh(i)-\sum_{i\in\partial V_r}g(i)\sum_{\substack{j\sim i\\j\notin V_r}}(h(i)-h(j))w_{ij}.
    \end{align*}
\end{lemma}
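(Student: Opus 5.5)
The plan is a direct summation-by-parts argument on the finite set $V_r$. I will take $\Delta_w h(i)=\sum_{j\sim i}(h(j)-h(i))w_{ij}$, the sum running over \emph{all} neighbours of $i$ in $V$, with symmetric weights $w_{ij}=w_{ji}$; this is the convention matching $\hat{\Delta}_p$ and the weighted operator $\tilde{D}^s$ above. I also assume the graph is locally finite, as $T_p$ is. Then $V_r$ is finite and every sum that appears is finite, so terms may be rearranged freely.

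\textbf{Step 1.} Expand the product on the left-hand side:
\begin{align*}
(g(i)-g(j))(h(i)-h(j))=g(i)(h(i)-h(j))-g(j)(h(i)-h(j)).
\end{align*}
In the second term, rename $i\leftrightarrow j$ in the double sum over ordered adjacent pairs in $V_r$. This is legitimate because both the index set and $w_{ij}$ are symmetric. The second term then becomes $+\sum g(i)(h(i)-h(j))w_{ij}$, so it equals the first term and cancels the factor $\tfrac12$. Hence
\begin{align*}
\frac12\sum_{\substack{i,j\in V_r\\ i\sim j}}(g(i)-g(j))(h(i)-h(j))w_{ij}=\sum_{i\in V_r}g(i)\sum_{\substack{j\sim i\\ j\in V_r}}(h(i)-h(j))w_{ij}.
\end{align*}

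\textbf{Step 2.} Complete the inner sum to all neighbours. For each $i\in V_r$,
\begin{align*}
\sum_{\substack{j\sim i\\ j\in V_r}}(h(i)-h(j))w_{ij}=-\Delta_w h(i)-\sum_{\substack{j\sim i\\ j\notin V_r}}(h(i)-h(j))w_{ij}.
\end{align*}

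\textbf{Step 3.} Show the correction sum vanishes off the boundary sphere. If $j\sim i$, then $|d(C,j)-d(C,i)|\le1$. So if $d(C,i)<r$, every neighbour of $i$ lies in $V_r$ and the correction sum is empty. Consequently the correction survives only for $i\in\partial V_r$. Substituting back into Step 1 gives exactly the stated identity.

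The only point requiring care is the sign and convention of $\Delta_w$ together with the symmetry $w_{ij}=w_{ji}$ used in Step 1. For $\tilde{D}^s$ this symmetry is immediate, since $w_{ij}=p^{d(C,i)(s-1)}+p^{d(C,j)(s-1)}$. The rest is bookkeeping, and no limiting argument is needed because $V_r$ is finite.
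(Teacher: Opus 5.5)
Your proof is correct and is essentially the paper's argument: the paper starts from $\sum_{i\in V_r}g(i)\sum_{j\sim i}(h(i)-h(j))w_{ij}=-\sum_{i\in V_r}g(i)\Delta_w h(i)$ and splits the neighbours into $j\in V_r$ (interior edges, paired using $w_{ij}=w_{ji}$ to give the $\tfrac12$-symmetrised energy) and $j\notin V_r$ (which forces $i\in\partial V_r$), which are your Steps 1--3 read in the opposite direction. A small aside: with the paper's definition $\tilde{D}^s\varphi(i)=\sum_{j\sim i}w_{ij}(\varphi(i)-\varphi(j))$, the operator $\tilde{D}^s$ is $-\Delta_w$ rather than $\Delta_w$ under your convention, but this has no bearing on the lemma itself.
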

\begin{proof}
    Consider
    \begin{align}
    \label{Green_pf}
        \sum_{i\in V_r}g(i)\sum_{j\sim i}(h(i)-h(j))w_{ij}.
    \end{align}
    \begin{enumerate}
        \item[]\textbf{Case 1.} $j\in V_r$, then $\{i,j\}$ is an interior edge. We have the pair $g(i)(h(i)-h(j))w_{ij}$ and $g(j)(h(j)-h(i))w_{ij}$. Adding both yields $(g(i)-g(j))(h(i)-h(j))w_{ij}$, so this part contributes
        \begin{align*}
            \frac{1}{2}\sum_{\substack{i,j\in V_r\\i\sim j}}(g(i)-g(j))(h(i)-h(j))w_{ij}.
        \end{align*}
        \item[]\textbf{Case 2.} $j\notin V_r$, then $\{i,j\}$ is a boundary edge, in this case $i\in \partial V_r$ and $j\notin V_r$, so we only have $g(i)(h(i)-h(j))w_{ij}$. This part contributes
        \begin{align*}
            \sum_{i\in\partial V_r}g(i)\sum_{\substack{j\notin V_r\\j\sim i}}(h(i)-h(j))w_{ij}.
        \end{align*}
    \end{enumerate}
    Note that \eqref{Green_pf} is
        \begin{align*}
            -\sum_{i\in V_r}g(i)\Delta_wh(i),
        \end{align*}
    so we have
        \begin{align*}
            -\sum_{i\in V_r}g(i)\Delta_wh(i)=\frac{1}{2}\sum_{\substack{i,j\in V_r\\i\sim j}}(g(i)-g(j))(h(i)-h(j))w_{ij}+\sum_{i\in\partial V_r}g(i)\sum_{\substack{j\notin V_r\\j\sim i}}(h(i)-h(j))w_{ij}.
        \end{align*}
\end{proof}

\begin{theorem}[Green’s first identity for weighted $T_p$]
\label{Green_identity_weighted_T_p}
    Suppose $T_p$ has the weight $w_{ij}=p^{d(C,i)(s-1)}+p^{d(C,j)(s-1)}$ when $i\sim j$. Then we have
    \begin{align*}
        &\frac{1}{2}\sum_{\substack{i,j\\i\sim j}}(g(i)-g(j))(h(i)-h(j))(p^{d(C,i)(s-1)}+p^{d(C,j)(s-1)})\\
        =&-\sum_{i}g(i)\tilde{D}^sh(i)+(p-p^{1-s})(1+p^{s-1})\int_{\partial T_p}g(x)\partial_{n,s}^{(p)}h(x)d\mu_0(x).
    \end{align*}
\end{theorem}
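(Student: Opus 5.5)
The plan is to apply Lemma \ref{Green_identity_weighted_graph} to the tree $T_p$ with the balls $V_r=\{i:d(C,i)\le r\}$ and then let $r\to\infty$. For finite $r$ the lemma gives an exact identity containing three terms: an interior energy over edges in $V_r$, a bulk term $-\sum_{i\in V_r}g(i)\Delta_w h(i)$, and a boundary flux over edges from $\partial V_r$ to level $r+1$. By the computation of the Euler--Lagrange equation, $\tilde{D}^s h(i)=\sum_{j\sim i}w_{ij}(h(i)-h(j))$. So the bulk term is $\tilde{D}^s$ summed against $g$, up to the sign convention relating $\Delta_w$ and $\tilde D^s$, which I would fix to match the statement. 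Letting $r\to\infty$, the interior energy and the bulk sum converge to the full sums in the statement, assuming absolute convergence, e.g.\ finite energy and $\tilde D^s h$ finitely supported or summable. All the work is therefore in identifying the limit of the boundary flux.

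For the flux I would use the geometry at level $r\ge1$: each $i\in\partial V_r$ has exactly $p$ neighbours $j$ at level $r+1$. On each such edge the weight is $w_{ij}=p^{r(s-1)}+p^{(r+1)(s-1)}=p^{r(s-1)}(1+p^{s-1})$. For $x\in\partial B_j$ I write
\begin{align*}
h(i)-h(j)=\big(h(x)-h(j)\big)-\big(h(x)-h(i)\big).
\end{align*}
By the definition \eqref{deformed_derivative}, $h(x)-h(i)=p^{-rs}\big(\partial_{n,s}^{(p)}h(x)+o(1)\big)$ and $h(x)-h(j)=p^{-(r+1)s}\big(\partial_{n,s}^{(p)}h(x)+o(1)\big)$. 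Hence
\begin{align*}
w_{ij}\,(h(i)-h(j))=-p^{-r}(1-p^{-s})(1+p^{s-1})\,\partial_{n,s}^{(p)}h(x)+o(p^{-r}).
\end{align*}
Summing over the $p$ children gives the factor $p(1-p^{-s})=p-p^{1-s}$. Since $\mu_0(\partial B_i)=p^{-r}$, the flux term becomes
\begin{align*}
(p-p^{1-s})(1+p^{s-1})\sum_{i\in\partial V_r}g(i)\,\mu_0(\partial B_i)\,\partial_{n,s}^{(p)}h(x_i)+o(1),
\end{align*}
where $x_i\in\partial B_i$. This is a Riemann sum for $(p-p^{1-s})(1+p^{s-1})\int_{\partial T_p}g\,\partial_{n,s}^{(p)}h\,d\mu_0$. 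It converges once $g(i)\to g(x)$ uniformly as $i\to x$ and $\partial_{n,s}^{(p)}h$ is continuous on the compact $\partial T_p$. The sign in front of the flux in Lemma \ref{Green_identity_weighted_graph} combines with the minus sign above to give the plus sign in the statement.

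The main obstacle is justifying the uniformity of the $o(\cdot)$ errors. The limit defining $\partial_{n,s}^{(p)}h(x)$ must converge uniformly in $x$, because the error is multiplied by a sum of $p^{r}$ terms of size $p^{-r}$, so only a uniform $o(1)$ is harmless. I would first establish this for $h$ given by the Poisson integral of a locally constant $f$, using the explicit kernel $P_s$. For $r$ larger than the scale of local constancy of $f$, the quantity $(h(x)-h(i))p^{d(C,i)s}$ is computed exactly by \eqref{calculate_deformed_derivative}. In fact it is then eventually constant in $i$ along each ray, so the error term vanishes identically for large $r$. For general $g,h$ I would state the identity under these regularity hypotheses: continuity up to the boundary, uniform existence of $\partial_{n,s}^{(p)}h$, and summability of the energy.
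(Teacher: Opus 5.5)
Your route is the paper's: you apply Lemma \ref{Green_identity_weighted_graph} on the balls $V_r$, let $r\to\infty$, and evaluate the flux using $w_{ij}=p^{r(s-1)}(1+p^{s-1})$, the $p$ children of each $i\in\partial V_r$, and $\mu_0(\partial B_i)=p^{-r}$. This gives the same constant $(p-p^{1-s})(1+p^{s-1})$. Your extra care fills in what the paper's $o(p^{-rs})$ bookkeeping leaves implicit: you expand each child around its own $x\in\partial B_j$, and you require a uniform $o(1)$ because $(p+1)p^{r-1}$ errors of size $o(p^{-r})$ are summed. You also observe that for the Poisson integral of a locally constant $f$ the error vanishes identically for large $r$.

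The one step that does not go through is ``fixing the sign convention to match the statement''. There is no freedom here. The Euler--Lagrange computation fixes $\tilde{D}^sh(i)=\sum_{j\sim i}w_{ij}(h(i)-h(j))$. The proof of Lemma \ref{Green_identity_weighted_graph} fixes $\Delta_wh(i)=\sum_{j\sim i}w_{ij}(h(j)-h(i))$, since it identifies $\sum_{i\in V_r}g(i)\sum_{j\sim i}(h(i)-h(j))w_{ij}$ with $-\sum_{i\in V_r}g(i)\Delta_wh(i)$. Hence $\tilde{D}^s=-\Delta_w$, and the bulk term is $+\sum_i g(i)\tilde{D}^sh(i)$, not $-\sum_i g(i)\tilde{D}^sh(i)$.

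The stated sign is actually wrong. Take $g=h$ finitely supported and nonzero, with boundary values $0$. Then the flux and $\partial_{n,s}^{(p)}h$ vanish, and $\sum_i h(i)\tilde{D}^sh(i)$ equals the strictly positive left-hand side. The statement as written would then force the energy to equal its own negative. The paper's proof makes the same slip when it writes $\tilde{D}^s$ in place of $\Delta_w$ while invoking the lemma.

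So you should prove the identity with $+\sum_i g(i)\tilde{D}^sh(i)$ and record the discrepancy rather than adjusting a convention. Nothing downstream is affected, because the theorem is only applied to $\varphi$ with $\tilde{D}^s\varphi=0$.
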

\begin{proof}
    From Lemma \ref{Green_identity_weighted_graph}, we have
    \begin{align*}
        &\frac{1}{2}\sum_{\substack{i,j\in V_r\\i\sim j}}(g(i)-g(j))(h(i)-h(j))(p^{d(C,i)(s-1)}+p^{d(C,j)(s-1)})\\
        =&-\sum_{i\in V_r}g(i)\tilde{D}^sh(i)-\sum_{i\in\partial V_r}g(i)\sum_{\substack{j\sim i\\j\notin V_r}}(h(i)-h(j))(p^{d(C,i)(s-1)}+p^{d(C,j)(s-1)}).
    \end{align*}
    As $r\to+\infty$, the first term tends to
    \begin{align*}
        \frac{1}{2}\sum_{\substack{i,j\\i\sim j}}(g(i)-g(j))(h(i)-h(j))(p^{d(C,i)(s-1)}+p^{d(C,j)(s-1)}),
    \end{align*}
    and the second term tends to
    \begin{align*}
        -\sum_{i}g(i)\tilde{D}^sh(i).
    \end{align*}
    For the third term, as $i\in\partial V_r$, so $d(C,i)=r$, $d(C,j)=r+1$. Then the weight $w_{ij}=p^{r(s-1)}+p^{(r+1)(s-1)}=p^{-r}p^{rs}(1+p^{s-1})$. And from \eqref{deformed_derivative} we get
    \begin{align*}
        h(i)=h(x)-\partial_{n,s}^{(p)}h(x)p^{-rs}+o(p^{-rs}),
    \end{align*}
    and there are $p$ such vertices $j$ satisfying
    \begin{align*}
        h(j)=h(x)-\partial_{n,s}^{(p)}h(x)p^{-(r+1)s}+o(p^{-(r+1)s}),
    \end{align*}
    so
    \begin{align*}
        h(i)-h(j)=-\partial_{n,s}^{(p)}h(x)(p^{-rs}-p^{-(r+1)s})+o(p^{-rs}),
    \end{align*}
    thus
    \begin{align*}
        -(h(i)-h(j))w_{ij}=&\partial_{n,s}^{(p)}h(x)(p^{-rs}-p^{-(r+1)s})p^{-r}p^{rs}(1+p^{s-1})+o(p^{-r})\\
        =&\partial_{n,s}^{(p)}h(x)(1-p^{-s})(1+p^{s-1})p^{-r}+o(p^{-r}).
    \end{align*}
    Notice that $\mu_0(\partial B_i)=p^{-r}$, when $r\to+\infty$, $i\to x$, so the third term tends to
    \begin{align*}
        p(1-p^{-s})(1+p^{s-1})\int_{\partial T_p}g(x)\partial_{n,s}^{(p)}h(x)d\mu_0(x),
    \end{align*}
    so we have
    \begin{align*}
        &\frac{1}{2}\sum_{\substack{i,j\\i\sim j}}(g(i)-g(j))(h(i)-h(j))(p^{d(C,i)(s-1)}+p^{d(C,j)(s-1)})\\
        =&-\sum_{i}g(i)\tilde{D}^sh(i)+(p-p^{1-s})(1+p^{s-1})\int_{\partial T_p}g(x)\partial_{n,s}^{(p)}h(x)d\mu_0(x).
    \end{align*}
\end{proof}

\begin{lemma}
\label{lem:direct_calculation}
    \begin{align*}
        \int_{\mathbb{Q}_p\backslash\mathbb{Z}_p}\frac{|x|_p^{s-1}-|y|_p^{s-1}}{|x-y|_p^{1+s}}dy=\frac{1-p^{-1}}{1-p^{-s}}\frac{1}{|x|_p^{1+s}}-\frac{1}{|x|_p^{2}},
    \end{align*}
    where $x\in\mathbb{Q}_p\backslash\mathbb{Z}_p$. And
    \begin{align*}
        \int_{\mathbb{Q}_p\backslash\mathbb{Z}_p}\frac{1-|y|_p^{s-1}}{|y|_p^{1+s}}dy=\frac{1-p^{-1}}{1-p^{-s}}-1.
    \end{align*}
\end{lemma}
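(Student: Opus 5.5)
Both identities are direct computations, and I would do them by decomposing $\mathbb{Q}_p\backslash\mathbb{Z}_p$ into $p$-adic spheres $S_k=\{y:|y|_p=p^k\}$, $k\ge1$, each with Haar measure $\mu(S_k)=p^k(1-p^{-1})$. The second identity is the easier one, so I do it first. On $S_k$ the integrand is constant, equal to $(1-p^{k(s-1)})p^{-k(1+s)}$. Multiplying by $\mu(S_k)$ gives
\begin{align*}
\int_{\mathbb{Q}_p\backslash\mathbb{Z}_p}\frac{1-|y|_p^{s-1}}{|y|_p^{1+s}}dy=(1-p^{-1})\sum_{k\ge1}\left(p^{-ks}-p^{-k}\right)=(1-p^{-1})\left(\frac{p^{-s}}{1-p^{-s}}-\frac{p^{-1}}{1-p^{-1}}\right).
\end{align*}
Both series converge because $s>0$. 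The right-hand side simplifies to $\frac{1-p^{-1}}{1-p^{-s}}(1-(1-p^{-s}))-p^{-1}\cdot 1=\frac{1-p^{-1}}{1-p^{-s}}-1$, after rewriting $\frac{p^{-s}}{1-p^{-s}}=\frac{1}{1-p^{-s}}-1$. This is the claimed value.

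For the first identity, fix $x$ with $|x|_p=p^n$, $n\ge1$, and split the domain into three parts. On $\{1<|y|_p<p^n\}$ and on $\{|y|_p>p^n\}$, the ultrametric property gives $|x-y|_p=\max(|x|_p,|y|_p)$, so on each sphere $S_k$ with $k\neq n$ the integrand is again constant. The remaining part is the sphere $S_n$ itself. There the numerator $|x|_p^{s-1}-|y|_p^{s-1}$ vanishes identically, so this part contributes zero. This is the only place where one might worry about the singularity at $y=x$, and it disappears entirely.

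What remains is two geometric sums:
\begin{align*}
(1-p^{-1})\sum_{k=1}^{n-1}p^{k}\,\frac{p^{n(s-1)}-p^{k(s-1)}}{p^{n(1+s)}}+(1-p^{-1})\sum_{k>n}p^{k}\,\frac{p^{n(s-1)}-p^{k(s-1)}}{p^{k(1+s)}}.
\end{align*}
I would evaluate these term by term. The first sum gives $p^{-2n}(p^{n}-p)/(p-1)\cdot(1-p^{-1})$ minus $p^{-n(1+s)}(1-p^{-1})\sum_{k=1}^{n-1}p^{ks}$. The second sum gives $p^{n(s-1)}(1-p^{-1})\sum_{k>n}p^{-ks}$ minus $(1-p^{-1})\sum_{k>n}p^{-k}$.

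I then collect terms by powers of $p$. The $p^{-2n}$-type pieces should combine to exactly $-p^{-2n}=-|x|_p^{-2}$. The $p^{-n(1+s)}$ pieces come from $\sum_{k=1}^{n-1}p^{ks}=\frac{p^{ns}-p^s}{p^s-1}$ and $\sum_{k>n}p^{-ks}=\frac{p^{-(n+1)s}}{1-p^{-s}}$. These should combine to $\frac{1-p^{-1}}{1-p^{-s}}p^{-n(1+s)}$, with the $p^{-n}$ pieces cancelling.

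The main obstacle is this bookkeeping: confirming that the stray terms cancel, especially the $p^{-n}$ terms and those involving $p^{s}/(p^s-1)$. As a safeguard I would check the formula at $n=1$, where the first sum is empty. It reduces to
\begin{align*}
(1-p^{-1})\left(p^{-1}\frac{p^{-s}}{1-p^{-s}}\cdot p^{0}\right)-(1-p^{-1})\frac{p^{-2}}{1-p^{-1}}.
\end{align*}
This should match $\frac{1-p^{-1}}{1-p^{-s}}p^{-1-s}-p^{-2}$ once the sums are evaluated correctly.
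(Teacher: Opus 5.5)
Your proposal is correct and follows the same route as the paper: split $\mathbb{Q}_p\backslash\mathbb{Z}_p$ into spheres $|y|_p=p^k$, note that the sphere $|y|_p=|x|_p$ contributes nothing because the numerator vanishes there, use $|x-y|_p=\max(|x|_p,|y|_p)$ on the other spheres, and sum the resulting geometric series. The bookkeeping you defer does close as you predict: the $\sum_{k=1}^{n-1}p^k$ and $\sum_{k>n}p^{-k}$ terms give $(p^{-n-1}-p^{-2n})-p^{-n-1}=-p^{-2n}$, and the $\sum_{k=1}^{n-1}p^{ks}$ and $\sum_{k>n}p^{-ks}$ terms leave exactly $\frac{1-p^{-1}}{1-p^{-s}}p^{-n(1+s)}$ once the $\pm(1-p^{-1})p^{-n}/(p^s-1)$ contributions cancel.
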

\begin{proof}
    By direct calculation, suppose $v_p(x)=k<0$, we have
    \begin{align*}
        &\int_{\mathbb{Q}_p\backslash\mathbb{Z}_p}\frac{|x|_p^{s-1}-|y|_p^{s-1}}{|x-y|_p^{1+s}}dy\\
        =&\sum_{l=-\infty}^{k-1}\frac{|x|_p^{s-1}-|y|_p^{s-1}}{|x-y|_p^{1+s}}dy\\
        =&\sum_{l=-\infty}^{k-1}\frac{p^{-k(s-1)}-p^{-l(s-1)}}{p^{-l(1+s)}}p^{-l}(1-p^{-1})+\sum_{l=k+1}^{-1}\frac{p^{-k(s-1)}-p^{-l(s-1)}}{p^{-k(1+s)}}p^{-l}(1-p^{-1})\\
        =&(1-p^{-1})\left(\sum_{l=1-k}^{\infty}\left(p^{k(1-s)}(p^{-s})^l-(p^{-1})^l\right)+\sum_{l=1}^{-k-1}\left(p^{2k}p^l-p^{k(s+1)}(p^s)^l\right)\right)\\
        =&(1-p^{-1})\left(\frac{p^{k-s}}{1-p^{-s}}-\frac{p^{k-1}}{1-p^{-1}}+\frac{p^{2k}-p^{k-1}}{p^{-1}-1}-\frac{p^{k(s+1)-p^{k-s}}}{p^{-s}-1}\right)\\
        =&(1-p^{-1})\left(\frac{p^{k(s+1)}}{1-p^{-s}}-\frac{p^{2k}}{1-p^{-1}}\right)\\
        =&\frac{1-p^{-1}}{1-p^{-s}}\frac{1}{|x|_p^{1+s}}-\frac{1}{|x|_p^{2}},
    \end{align*}
    and
    \begin{align*}
        \int_{\mathbb{Q}_p\backslash\mathbb{Z}_p}\frac{1-|y|_p^{s-1}}{|y|_p^{1+s}}dy=&\int_{\mathbb{Q}_p\backslash\mathbb{Z}_p}\frac{1}{|y|_p^{1+s}}-\frac{1}{|y|_p^{2}}dy\\
        =&\sum_{l=-\infty}^{-1}(p^{l(s+1)}-p^{2l})p^{-l}(1-p^{-1})\\
        =&(1-p^{-1})\sum_{l=1}^{\infty}\left((p^{-s})^l-(p^{-1})^l\right)\\
        =&(1-p^{-1})\left(\frac{p^{-s}}{1-p^{-s}}-\frac{p^{-1}}{1-p^{-1}}\right)\\
        =&\frac{1-p^{-1}}{1-p^{-s}}-1.
    \end{align*}
\end{proof}

\begin{lemma}
\label{lem:energy_relation_Qp_tree_boundary}
    Let
    \begin{align*}
        a(x)=\begin{cases}
            1,&x\in\mathbb{Z}_p;\\
            |x|_p^{s-1},&x\in\mathbb{Q}_p\backslash\mathbb{Z}_p.
        \end{cases}
    \end{align*}
    Then we have
    \begin{align*}
        &\int_{\mathbb{Q}_p\times\mathbb{Q}_p}\frac{(a(x)f(x)-a(y)f(y))^2}{|x-y|_p^{1+s}}dxdy\\
        =&\int_{\partial T_p\times\partial T_p}\frac{(f(x)-f(y))^2}{|x,y|_p^{1+s}}d\mu_0(x)d\mu_0(y)+2\left(\frac{1-p^{-1}}{1-p^{-s}}-1\right)\int_{\partial T_p}f^2(x)d\mu_0(x).
    \end{align*}
\end{lemma}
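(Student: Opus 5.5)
The plan is to split $\mathbb{Q}_p\times\mathbb{Q}_p$ into the four regions $\mathbb{Z}_p\times\mathbb{Z}_p$, $(\mathbb{Q}_p\backslash\mathbb{Z}_p)^2$, $\mathbb{Z}_p\times(\mathbb{Q}_p\backslash\mathbb{Z}_p)$ and its mirror image. On each region I will rewrite the tree integrand using \eqref{distance_relation} and \eqref{measure_relation}, and then compare it with the $\mathbb{Q}_p$ integrand via the algebraic identity
\begin{align*}
(a(x)f(x)-a(y)f(y))^2=a(x)a(y)(f(x)-f(y))^2+(a(x)-a(y))\bigl(a(x)f^2(x)-a(y)f^2(y)\bigr).
\end{align*}
The first term should reproduce exactly the tree Dirichlet form. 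The second term will be integrated using Lemma \ref{lem:direct_calculation}.

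On $\mathbb{Z}_p\times\mathbb{Z}_p$ we have $a\equiv1$, $|x,y|_p=|x-y|_p$ and $d\mu_0=dx$, so the two sides agree identically. On $(\mathbb{Q}_p\backslash\mathbb{Z}_p)^2$ we have $|x,y|_p=|x^{-1}-y^{-1}|_p=|x-y|_p/(|x|_p|y|_p)$. Hence the tree integrand $(f(x)-f(y))^2|x,y|_p^{-1-s}\,d\mu_0(x)d\mu_0(y)$ becomes $|x|_p^{s-1}|y|_p^{s-1}(f(x)-f(y))^2|x-y|_p^{-1-s}\,dx\,dy$, which is precisely the $a(x)a(y)$ term. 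By symmetry in $x\leftrightarrow y$, the remainder equals
\[
2\int_{\mathbb{Q}_p\backslash\mathbb{Z}_p}a(x)f^2(x)\int_{\mathbb{Q}_p\backslash\mathbb{Z}_p}\frac{a(x)-a(y)}{|x-y|_p^{1+s}}\,dy\,dx .
\]
By the first part of Lemma \ref{lem:direct_calculation}, writing $c=\frac{1-p^{-1}}{1-p^{-s}}$, the inner integral equals $c|x|_p^{-1-s}-|x|_p^{-2}$. Multiplying by $a(x)=|x|_p^{s-1}$, this region therefore contributes $2\int_{|x|_p>1}f^2(x)\bigl(c|x|_p^{-2}-|x|_p^{s-3}\bigr)dx$.

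On the mixed region, with $x\in\mathbb{Z}_p$ and $|y|_p>1$, we have $|x-y|_p=|y|_p$, $|x,y|_p=1$ and $d\mu_0(y)=dy/|y|_p^2$. The identity with $a(x)=1$ gives the $a(y)|y|_p^{-1-s}=|y|_p^{-2}$ term, which matches the tree side. The remaining term $(1-a(y))(f^2(x)-a(y)f^2(y))/|y|_p^{1+s}$ then splits into two pieces:
\begin{enumerate}
\item[(i)] the $f^2(x)$ piece, whose $y$-integral is $c-1$ by the second part of Lemma \ref{lem:direct_calculation}; it yields $(c-1)\int_{\mathbb{Z}_p}f^2$;
\item[(ii)] the $f^2(y)$ piece, whose $x$-integral over $\mathbb{Z}_p$ has measure $1$; it yields $-\int_{|y|_p>1}f^2(y)\bigl(|y|_p^{-2}-|y|_p^{s-3}\bigr)dy$.
\end{enumerate}
Counting both orderings doubles these pieces. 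The key cancellation is that the $|y|_p^{s-3}$ terms from (ii) cancel those from the exterior-exterior region. What survives outside $\mathbb{Z}_p$ is $2(c-1)\int_{|y|_p>1}f^2(y)|y|_p^{-2}dy=2(c-1)\int_{|y|_p>1}f^2\,d\mu_0$. Together with $2(c-1)\int_{\mathbb{Z}_p}f^2\,d\mu_0$ this gives the claimed $2(c-1)\int_{\partial T_p}f^2\,d\mu_0$.

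The algebra is routine once the splitting is set up. I expect the main obstacle to be justifying the splitting of the nonnegative integral into separately integrated signed pieces, since individual pieces like $\int(a(x)-a(y))a(x)f^2(x)|x-y|_p^{-1-s}$ must converge absolutely. I would first prove the identity for locally constant $f$ on $\partial T_p$, which is bounded and constant near $\infty$. In that case each piece is absolutely convergent: the near-diagonal singularity is killed because $a$ is constant on balls of radius $\le|x|_p$ away from $\mathbb{Z}_p$, and the tails decay like $|x|_p^{-2}$ or $|x|_p^{s-3}$. The general case would then follow by density, or the lemma can be read as an identity of extended-valued quantities.
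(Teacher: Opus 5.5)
Your algebra is the same as the paper's proof. The paper also cancels the $\mathbb{Z}_p\times\mathbb{Z}_p$ block and expands the other three blocks into exactly your remainder $(a(x)-a(y))(a(x)f^2(x)-a(y)f^2(y))|x-y|_p^{-1-s}$. It then symmetrizes, evaluates the inner integrals with Lemma \ref{lem:direct_calculation} and $\int_{\mathbb{Z}_p}|x|_p^{-1-s}dy=|x|_p^{-1-s}$, and the $|x|_p^{s-3}$ terms cancel. All your intermediate values are correct. The paper never discusses convergence, so the only place you go beyond it is your last paragraph, and that paragraph is false for $s>1$.

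Write $f(\infty)$ for the value of a locally constant $f$ near $\infty$, and take $f\equiv 1$ with $s>1$. The left-hand side is then $\int\int (a(x)-a(y))^2|x-y|_p^{-1-s}\,dx\,dy$. The shells $|x|_p=p^k$, $|y|_p=p^{k+1}$ alone contribute
\[
(1-p^{-1})^2(p^{s-1}-1)^2p^{-s}\,p^{k(s-1)},\qquad k\ge 1,
\]
so the left-hand side is $+\infty$. The right-hand side is $2(c-1)(1+p^{-1})<\infty$, with your $c$.

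The step that breaks is your symmetrization on $(\mathbb{Q}_p\backslash\mathbb{Z}_p)^2$. The signed inner integral $c|x|_p^{-1-s}-|x|_p^{-2}$ is small only because of cancellation. The absolute inner integral $\int_{|y|_p>1}|a(x)-a(y)|\,|x-y|_p^{-1-s}dy$ is of order $|x|_p^{-1}$. After multiplying by $a(x)f^2(x)=|x|_p^{s-1}f(\infty)^2$, the $x$-integral behaves like $\sum_k p^{k(s-1)}$. So the separate pieces are not absolutely integrable, and Fubini does not apply. For $s\ge 2$, even your signed term $\int_{|x|_p>1}f^2|x|_p^{s-3}dx$ diverges, so you would be subtracting infinities.

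Neither a density argument nor an extended-valued reading can rescue the statement in this regime, because the identity is simply false there. The paper's proof has the same hidden gap.

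Your absolute-convergence check is correct in two cases: $0<s\le 1$ with $f$ locally constant, and any $s>0$ with $f$ vanishing in a neighbourhood of $\infty$. In those cases your argument is complete. You should state one of these hypotheses explicitly.
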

\begin{proof}
Decomposing $\mathbb{Q}_p$ into $\mathbb{Z}_p$ and $\mathbb{Q}_p\backslash\mathbb{Z}_p$ and substituting \eqref{distance_relation} and \eqref{measure_relation}, we have
\begin{equation}
\label{eq:diffference_between_energies}
\begin{aligned}
    &\int_{\mathbb{Q}_p\times\mathbb{Q}_p}\frac{(a(x)f(x)-a(y)f(y))^2}{|x-y|_p^{1+s}}dxdy-\int_{\partial T_p\times\partial T_p}\frac{(f(x)-f(y))^2}{|x,y|_p^{1+s}}d\mu_0(x)d\mu_0(y)\\
    =&\int_{\mathbb{Q}_p\backslash\mathbb{Z}_p\times\mathbb{Q}_p\backslash\mathbb{Z}_p}\frac{(|x|_p^{s-1}f(x)-|y|_p^{s-1}f(y))^2-|x|_p^{s-1}|y|_p^{s-1}(f(x)-f(y))^2}{|x-y|_p^{1+s}}dxdy\\
    &+\int_{\mathbb{Q}_p\backslash\mathbb{Z}_p\times\mathbb{Z}_p}\frac{(|x|_p^{s-1}f(x)-f(y))^2-|x|_p^{s-1}(f(x)-f(y))^2}{|x-y|_p^{1+s}}dxdy\\
    &+\int_{\mathbb{Z}_p\times\mathbb{Q}_p\backslash\mathbb{Z}_p}\frac{(f(x)-|y|_p^{s-1}f(y))^2-|y|_p^{s-1}(f(x)-f(y))^2}{|x-y|_p^{1+s}}dxdy\\
    =&\int_{\mathbb{Q}_p\backslash\mathbb{Z}_p}|x|_p^{s-1}f^2(x)dx\int_{\mathbb{Q}_p\backslash\mathbb{Z}_p}\frac{|x|_p^{s-1}-|y|_p^{s-1}}{|x-y|_p^{1+s}}dy+\int_{\mathbb{Q}_p\backslash\mathbb{Z}_p}|y|_p^{s-1}f^2(y)dy\int_{\mathbb{Q}_p\backslash\mathbb{Z}_p}\frac{|y|_p^{s-1}-|x|_p^{s-1}}{|x-y|_p^{1+s}}dx\\
    &+\int_{\mathbb{Q}_p\backslash\mathbb{Z}_p}(|x|_p^{2s-2}-|x|_p^{s-1})f^2(x)dx\int_{\mathbb{Z}_p}\frac{1}{|x-y|_p^{1+s}}dy+\int_{\mathbb{Z}_p}f^2(y)dy\int_{\mathbb{Q}_p\backslash\mathbb{Z}_p}\frac{1-|x|_p^{s-1}}{|x-y|_p^{1+s}}dx\\
    &+\int_{\mathbb{Z}_p}f^2(x)dx\int_{\mathbb{Q}_p\backslash\mathbb{Z}_p}\frac{1-|y|_p^{s-1}}{|x-y|_p^{1+s}}dy+\int_{\mathbb{Q}_p\backslash\mathbb{Z}_p}(|y|_p^{2s-2}-|y|_p^{s-1})f^2(y)dy\int_{\mathbb{Z}_p}\frac{1}{|x-y|_p^{1+s}}dx\\
    =&2\int_{\mathbb{Q}_p\backslash\mathbb{Z}_p}f^2(x)|x|_p^{s-1}dx\left(\int_{\mathbb{Z}_p}\frac{|x|_p^{s-1}-1}{|x|_p^{1+s}}dy+\int_{\mathbb{Q}_p\backslash\mathbb{Z}_p}\frac{|x|_p^{s-1}-|y|_p^{s-1}}{|x-y|_p^{1+s}}dy\right)\\
    &+2\int_{\mathbb{Z}_p}f^2(x)dx\int_{\mathbb{Q}_p\backslash\mathbb{Z}_p}\frac{1-|y|_p^{s-1}}{|y|_p^{1+s}}dy.
\end{aligned}    
\end{equation}
Noticing that
\begin{align*}
    \int_{\mathbb{Z}_p}\frac{|x|_p^{s-1}-1}{|x|_p^{1+s}}dy=\frac{1}{|x|_p^{2}}-\frac{1}{|x|_p^{1+s}}
\end{align*}
and by Lemma \ref{lem:direct_calculation}, \eqref{eq:diffference_between_energies} becomes
    \begin{align*}
        &2\int_{\mathbb{Q}_p\backslash\mathbb{Z}_p}f^2(x)|x|_p^{s-1}\left(\frac{1-p^{-1}}{1-p^{-s}}\frac{1}{|x|_p^{1+s}}-\frac{1}{|x|_p^{2}}+\frac{1}{|x|_p^{2}}-\frac{1}{|x|_p^{1+s}}\right)dx+2\int_{\mathbb{Z}_p}f^2(x)\left(\frac{1-p^{-1}}{1-p^{-s}}-1\right)dx\\
        =&2\left(\frac{1-p^{-1}}{1-p^{-s}}-1\right)\left(\int_{\mathbb{Z}_p}f^2(x)dx+\int_{\mathbb{Q}_p\backslash\mathbb{Z}_p}f^2(x)\frac{dx}{|x|_p^{2}}\right)\\
        =&2\left(\frac{1-p^{-1}}{1-p^{-s}}-1\right)\int_{\partial T_p}f^2(x)d\mu_0(x).
    \end{align*}
\end{proof}

\begin{remark}
    The piecewise function $a(x)$ fundamentally acts as a conformal weighting factor. Its operational necessity arises from the geometric imperative to dynamically compensate for the metric distortion introduced when mapping the spherically symmetric, compact boundary $\partial T_p$ onto the translationally invariant, non-compact field $\mathbb{Q}_p$ via stereographic projection. This conformal adjustment is essential for aligning the boundary measure $\mu_0$ with the standard Haar measure $dx$ while preserving the fractional scaling dimensions.
\end{remark}

\begin{theorem}
The relationship between the internal energy and the boundary energy is as follows:
    \begin{align*}
        &\sum_{i}\sum_{j\sim i}(\varphi(j)-\varphi(i))^2p^{d(C,i)(s-1)}\\
        =&\frac{(1-p^{-s})(p^{s-1}-p^{1-s})}{(1+p^{-1})(1-p^{-1-s})}
    \left(\int_{\partial T_p}f(x)d\mu_0(x)\right)^2+(1-p^{-s})(1+p^{1-s})\int_{\mathbb{Q}_p}a(x)f(x)D^s\left(a(x)f(x)\right)dx.
    \end{align*}
\end{theorem}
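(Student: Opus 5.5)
The plan is to apply Theorem~\ref{Green_identity_weighted_T_p} with $g=h=\varphi$, where $\varphi$ solves \eqref{extension_problem}, so that the term $\sum_i g(i)\tilde{D}^s h(i)$ vanishes. Throughout I take $f$ locally constant (or at least regular enough that the limits below exist and can be exchanged with the integrals, as was done in \eqref{calculate_deformed_derivative}).

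\textbf{Step 1: the interior energy.} On the left-hand side, I will symmetrise the edge sum. Each unordered edge $\{i,j\}$ appears twice in the ordered double sum, with weight $p^{d(C,i)(s-1)}+p^{d(C,j)(s-1)}$. Hence
\begin{align*}
\frac{1}{2}\sum_{\substack{i,j\\ i\sim j}}(\varphi(i)-\varphi(j))^2 w_{ij}=\sum_i\sum_{j\sim i}(\varphi(j)-\varphi(i))^2p^{d(C,i)(s-1)},
\end{align*}
which is exactly the left side of the theorem. It will then equal $(p-p^{1-s})(1+p^{s-1})\int_{\partial T_p} f\,\partial^{(p)}_{n,s}\varphi\,d\mu_0$.

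\textbf{Step 2: the boundary term.} I substitute \eqref{calculate_deformed_derivative} into the boundary integral and symmetrise in $(x,y)$:
\begin{align*}
\int_{\partial T_p} f\,\partial^{(p)}_{n,s}\varphi\,d\mu_0=\frac{1}{2}\int_{\partial T_p\times\partial T_p}\Big(\frac{C_1}{|x,y|_p^{1+s}}+C_2\Big)(f(x)-f(y))^2\,d\mu_0(x)\,d\mu_0(y).
\end{align*}
The $C_2$ part is explicit. Since $\mu_0(\partial T_p)=1+p^{-1}$, it equals
\begin{align*}
C_2\Big((1+p^{-1})\int_{\partial T_p} f^2\,d\mu_0-\Big(\int_{\partial T_p} f\,d\mu_0\Big)^2\Big).
\end{align*}
For the $C_1$ part I use Lemma~\ref{lem:energy_relation_Qp_tree_boundary}. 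It rewrites the tree-boundary double integral as the $\mathbb{Q}_p$ double integral of $a f$, minus $2\big(\frac{1-p^{-1}}{1-p^{-s}}-1\big)\int_{\partial T_p} f^2\,d\mu_0$.

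\textbf{Step 3: passing to the Vladimirov derivative.} Set $g=af$. Symmetrising the definition of $D^s$ gives
\begin{align*}
\int_{\mathbb{Q}_p\times\mathbb{Q}_p}\frac{(g(x)-g(y))^2}{|x-y|_p^{1+s}}\,dx\,dy=-2\Gamma_p(-s)\int_{\mathbb{Q}_p} g\,D^s g\,dx.
\end{align*}
Here one must check integrability of $g$ at infinity; since $g\sim|x|_p^{s-1}\cdot\text{const}$ there, this holds at least in the relevant range of $s$, or one argues by the same direct computation as in Lemma~\ref{lem:direct_calculation}.

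\textbf{Step 4: collecting constants.} Three facts must hold for the statement to come out:
\begin{enumerate}
\item The $\int f^2\,d\mu_0$ terms cancel, i.e.
\begin{align*}
C_2(1+p^{-1})=C_1\Big(\frac{1-p^{-1}}{1-p^{-s}}-1\Big).
\end{align*}
\item The coefficient $-(p-p^{1-s})(1+p^{s-1})C_2$ of $\big(\int f\,d\mu_0\big)^2$ equals
\begin{align*}
\frac{(1-p^{-s})(p^{s-1}-p^{1-s})}{(1+p^{-1})(1-p^{-1-s})}.
\end{align*}
\item The coefficient $-(p-p^{1-s})(1+p^{s-1})\,C_1\Gamma_p(-s)$ equals $(1-p^{-s})(1+p^{1-s})$.
\end{enumerate}
All three use \eqref{C_1}, \eqref{C_2} and $\Gamma_p(-s)=\frac{1-p^{-s-1}}{1-p^{s}}$.

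I expect this bookkeeping to be the main obstacle. In particular, the cancellation in item 1 is what makes the statement clean: if it fails, the statement would carry an extra $\int f^2$ term. I would verify it first, by writing everything over the common denominator $p^s-p^{-1}=p^{-1}(p^{1+s}-1)$. A secondary, more analytic point is justifying the $r\to\infty$ limits in Theorem~\ref{Green_identity_weighted_T_p} and the interchange of limit and integral for the given $f$. I would handle this by first proving the identity for locally constant $f$, for which $\varphi$ is explicit on large balls.
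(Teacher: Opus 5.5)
Your route is the same as the paper's: Green's identity with $g=h=\varphi$, the formula \eqref{calculate_deformed_derivative}, the $C_2$ term via $\mu_0(\partial T_p)=1+p^{-1}$, Lemma~\ref{lem:energy_relation_Qp_tree_boundary}, and then the definition of $D^s$. The three constant identities in Step 4 are all correct. The gap is the integrability point you passed over in Step 3, and it is real.

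Suppose $s>1$ and $f(\infty)\neq 0$, where $\infty\in\mathbb{P}^1(\mathbb{Q}_p)\cong\partial T_p$. Then the double integral $\int_{\mathbb{Q}_p\times\mathbb{Q}_p}\frac{(g(x)-g(y))^2}{|x-y|_p^{1+s}}dx\,dy$ with $g=af$ equals $+\infty$. To see this, let $f\equiv c\neq 0$ on $|x|_p>R$ and take $|x|_p=p^A>|y|_p=p^B>R$. Then $|x-y|_p=p^A$ and $(g(x)-g(y))^2\ge c^2(1-p^{1-s})^2p^{2A(s-1)}$. This holds on a set of measure comparable to $p^{2A}$, so level $A$ contributes at least a constant times $p^{A(s-1)}$, and the sum over $A$ diverges.

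The same defect sits in Lemma~\ref{lem:energy_relation_Qp_tree_boundary}, which you use in Step 2. For $f\equiv 1$ it would equate the nonnegative quantity $\int_{\mathbb{Q}_p\times\mathbb{Q}_p}\frac{(a(x)-a(y))^2}{|x-y|_p^{1+s}}dx\,dy$ with $2(1+p^{-1})\beta$, where $\beta=\frac{1-p^{-1}}{1-p^{-s}}-1=\frac{1-p^{s-1}}{p^s-1}<0$ for $s>1$. Correspondingly $\int_{\mathbb{Q}_p}a\,D^sa\,dx=(1+p^{-1})\frac{1-p^{s-1}}{1-p^{-1-s}}<0$, so your Step 3 identity cannot hold either. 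In both places the error is swapping the order of integration for $\frac{g(x)(g(x)-g(y))}{|x-y|_p^{1+s}}$, which is not absolutely integrable on $\mathbb{Q}_p^2$ once $s>1$. For $0<s\le 1$, or when $f$ vanishes near $\infty$, everything converges absolutely and your argument works as written.

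The theorem itself still holds for every $s>0$. Your fallback, arguing by the direct computation of Lemma~\ref{lem:direct_calculation}, is the right repair, provided you never symmetrise on $\mathbb{Q}_p$. Without any symmetrisation, \eqref{calculate_deformed_derivative} gives
\[
\int f\,\partial^{(p)}_{n,s}\varphi\,d\mu_0=C_1 I+C_2\bigl((1+p^{-1})\textstyle\int f^2d\mu_0-(\int f\,d\mu_0)^2\bigr),
\]
where $I$ is the left side of the identity below. What you actually need is the one-sided identity
\begin{align*}
\int_{\partial T_p}f(x)\int_{\partial T_p}\frac{f(x)-f(y)}{|x,y|_p^{1+s}}\,d\mu_0(y)\,d\mu_0(x)=\int_{\mathbb{Q}_p}g(x)\int_{\mathbb{Q}_p}\frac{g(x)-g(y)}{|x-y|_p^{1+s}}\,dy\,dx-\beta\int_{\partial T_p}f^2\,d\mu_0 .
\end{align*}
By the definition of $D^s$, the middle term equals $-\Gamma_p(-s)\int_{\mathbb{Q}_p}g\,D^sg\,dx$.

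This identity holds pointwise in $x$. Rewrite the left side using \eqref{distance_relation} and \eqref{measure_relation}; the $f(y)$-terms then cancel algebraically in the difference of the two $x$-integrands, leaving $\beta f(x)^2\,d\mu_0(x)$. For $x\in\mathbb{Z}_p$ this uses the second formula of Lemma~\ref{lem:direct_calculation}. For $x\notin\mathbb{Z}_p$ it uses the first formula together with $\int_{\mathbb{Z}_p}\frac{|x|_p^{s-1}-1}{|x|_p^{1+s}}dy=|x|_p^{-2}-|x|_p^{-1-s}$. This is in effect what the proof of Lemma~\ref{lem:energy_relation_Qp_tree_boundary} computes before doubling it.

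All inner integrals converge absolutely, and the outer integrand on the $\mathbb{Q}_p$ side is $O(|x|_p^{-2})$. Combining this with your items 1--3 gives the stated formula with no restriction on $s$.
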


\begin{proof}
Applying Theorem \ref{Green_identity_weighted_T_p} to a function $\varphi$ that strictly satisfies the harmonic extension condition outlined in \eqref{extension_problem}, we deduce
\begin{align}
\label{eq:energy_original}
    \frac{1}{2}\sum_{\substack{i,j\\i\sim j}}(\varphi(i)-\varphi(j))^2(p^{d(C,i)(s-1)}+p^{d(C,j)(s-1)})=(p-p^{1-s})(1+p^{s-1})\int_{\partial T_p}\varphi(x)\partial_{n,s}^{(p)}\varphi(x)d\mu_0(x),
\end{align}
by \eqref{energy_functional} and \eqref{calculate_deformed_derivative}, \eqref{eq:energy_original} becomes
\begin{equation}
\label{eq:relation_interior_tree_boundary}
\begin{aligned}
    2S[\varphi]=&(p-p^{1-s})(1+p^{s-1})\int_{\partial T_p}\varphi(x)\partial_{n,s}^{(p)}\varphi(x)d\mu_0(x)\\
    =&(p-p^{1-s})(1+p^{s-1})\int_{\partial T_p}f(x)\int_{\partial T_p}\left(\frac{C_1}{|x,y|_p^{1+s}}+C_2\right)(f(x)-f(y))d\mu_0(y)d\mu_0(x)\\
    =&\frac{1}{2}(p-p^{1-s})(1+p^{s-1})\int_{\partial T_p\times\partial T_p}\left(\frac{C_1}{|x,y|_p^{1+s}}+C_2\right)(f(x)-f(y))^2d\mu_0(x)d\mu_0(y).
\end{aligned}
\end{equation}
Since
\begin{equation}
\label{eq:int_(fx-fy)^2}
\begin{aligned}
    \int_{\partial T_p\times\partial T_p}(f(x)-f(y))^2d\mu_0(x)d\mu_0(y)=&\int_{\partial T_p\times\partial T_p}\left(f^2(x)+f^2(y)-2f(x)f(y)\right)d\mu_0(x)d\mu_0(y)\\
    =&2(1+p^{-1})\int_{\partial T_p}f^2(x)d\mu_0(x)-2\left(\int_{\partial T_p}f(x)d\mu_0(x)\right)^2.
\end{aligned}    
\end{equation}
And by \eqref{eq:C_2/C_1}, we have
\begin{equation}
\label{eq:coeffiecent_int_f^2}
\begin{aligned}
    \left(\frac{1-p^{-1}}{1-p^{-s}}-1\right)-(1+p^{-1})\frac{C_2}{C_1}=&\frac{1-p^{-1}}{1-p^{-s}}-1-(1+p^{-1})\frac{p^{1-s}-1}{(p+1)(1-p^{-s})}\\
    =&\frac{1-p^{-1}}{1-p^{-s}}-1-\frac{p^{-s}-p^{-1}}{1-p^{-s}}\\
    =&0.
\end{aligned}    
\end{equation}
Then by \eqref{eq:int_(fx-fy)^2} and \eqref{eq:coeffiecent_int_f^2}, we obtain
\begin{align*}
    &\frac{C_2}{C_1}\int_{\partial T_p\times\partial T_p}(f(x)-f(y))^2d\mu_0(x)d\mu_0(y)\\
    =&2\left(\frac{1-p^{-1}}{1-p^{-s}}-1\right)\int_{\partial T_p}f^2(x)d\mu_0(x)-2\frac{C_2}{C_1}\left(\int_{\partial T_p}f(x)d\mu_0(x)\right)^2.
\end{align*}
Then by Lemma \ref{lem:energy_relation_Qp_tree_boundary}, we get
\begin{align*}
    &C_1\int_{\mathbb{Q}_p\times\mathbb{Q}_p}\frac{(a(x)f(x)-a(y)f(y))^2}{|x-y|_p^{1+s}}dxdy\\
    =&\int_{\partial T_p\times\partial T_p}\left(\frac{C_1}{|x,y|_p^{1+s}}+C_2\right)(f(x)-f(y))^2d\mu_0(x)d\mu_0(y)+2C_2\left(\int_{\partial T_p}f(x)d\mu_0(x)\right)^2.
\end{align*}
Substituting to \eqref{eq:relation_interior_tree_boundary} yields
\begin{align*}
    &2S[\varphi]\\
    =&\frac{1}{2}(p-p^{1-s})(1+p^{s-1})\left(C_1\int_{\mathbb{Q}_p\times\mathbb{Q}_p}\frac{(a(x)f(x)-a(y)f(y))^2}{|x-y|_p^{1+s}}dxdy-2C_2\left(\int_{\partial T_p}f(x)d\mu_0(x)\right)^2\right)\\
    =&(p-p^{1-s})(1+p^{s-1})\left(C_1\int_{\mathbb{Q}_p}a(x)f(x)dx\int_{\mathbb{Q}_p}\frac{a(x)f(x)-a(y)f(y)}{|x-y|_p^{1+s}}dy-C_2\left(\int_{\partial T_p}f(x)d\mu_0(x)\right)^2\right)\\
    =&(p-p^{1-s})(1+p^{s-1})\left(-C_1\Gamma_p(-s)\int_{\mathbb{Q}_p}a(x)f(x)D^s\left(a(x)f(x)\right)dx-C_2\left(\int_{\partial T_p}f(x)d\mu_0(x)\right)^2\right)\\
    =&\frac{(1-p^{-s})(p^{s-1}-p^{1-s})}{(1+p^{-1})(1-p^{-1-s})}
    \left(\int_{\partial T_p}f(x)d\mu_0(x)\right)^2+(1-p^{-s})(1+p^{1-s})\int_{\mathbb{Q}_p}a(x)f(x)D^s\left(a(x)f(x)\right)dx.
\end{align*}
\end{proof}

\begin{corollary}
If the mean value of $f(x)$ is zero, i.e. $\int_{\partial T_p}f(x)d\mu_0(x)=0$, then we have
\begin{align*}
    \sum_{i}\sum_{j\sim i}(\varphi(j)-\varphi(i))^2p^{d(C,i)(s-1)}=(1-p^{-s})(1+p^{1-s})\int_{\mathbb{Q}_p}a(x)f(x)D^s\left(a(x)f(x)\right)dx.
\end{align*}
\end{corollary}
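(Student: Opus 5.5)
This corollary follows by direct specialisation of the preceding Theorem, so my plan is to substitute the zero-mean hypothesis into its energy identity. I take $\varphi$ to be the solution of \eqref{extension_problem} with boundary datum $f$, given by the Poisson integral with kernel $P_s$. The Theorem then gives
\begin{align*}
\sum_{i}\sum_{j\sim i}(\varphi(j)-\varphi(i))^2p^{d(C,i)(s-1)}
=K_s\left(\int_{\partial T_p}f\,d\mu_0\right)^2+(1-p^{-s})(1+p^{1-s})\int_{\mathbb{Q}_p}a f\,D^s(af)\,dx,
\end{align*}
where $K_s=\frac{(1-p^{-s})(p^{s-1}-p^{1-s})}{(1+p^{-1})(1-p^{-1-s})}$ is a finite constant for $s>0$. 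The denominators $1+p^{-1}$ and $1-p^{-1-s}$ never vanish, so $K_s$ is always finite.

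Under the hypothesis $\int_{\partial T_p}f\,d\mu_0=0$, the first term on the right vanishes, and the claimed identity is what remains. No further computation is needed.

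The only point I would pause over is that the hypothesis is stated with respect to $\mu_0$ on $\partial T_p$. It is not a condition for the Haar measure on $\mathbb{Q}_p$, and in particular it does not say that $af$ has zero mean. This is precisely the quantity that appears in the Theorem, because the term $-2C_2(\int f\,d\mu_0)^2$ arose in the proof of the Theorem from expanding $\int_{\partial T_p\times\partial T_p}(f(x)-f(y))^2\,d\mu_0\,d\mu_0$ via \eqref{eq:int_(fx-fy)^2}. So the statement is consistent as written, and no hidden obstacle is expected.
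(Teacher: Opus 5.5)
Your proposal is correct and follows the paper's route: the corollary is the preceding Theorem with $\int_{\partial T_p}f\,d\mu_0=0$, which removes the term whose coefficient is $\frac{(1-p^{-s})(p^{s-1}-p^{1-s})}{(1+p^{-1})(1-p^{-1-s})}$. Your side remark is also accurate: that term comes from the constant part $C_2$ of the Poisson kernel, so the relevant mean is taken with respect to $\mu_0$ on $\partial T_p$, not the Haar mean of $af$.
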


\section*{Acknowledgements}
YJS thanks professor Jürgen Jost for his insightful idea and invaluable guidance during his visit to Shanghai Jiao Tong University. YJS also extends his sincere gratitude to Professor Bobo Hua for his constant support throughout this research.

\bibliographystyle{plain}
\bibliography{references}

\noindent An Huang, anhuang@brandeis.edu\\
\emph{Department of Mathematics, Brandeis University, Waltham, MA 02453, USA}\\[-8pt]

\noindent Yaojia Sun, 26110180043@m.fudan.edu.cn\\
\emph{School of Mathematical Sciences, Fudan University, Shanghai, 200433, P.R. China}\\[-8pt]
\end{document}